\newtheorem{theorem}{Theorem}[section]
\newtheorem{lemma}[theorem]{Lemma}
\newtheorem{corollary}[theorem]{Corollary}
\newtheorem{proposition}[theorem]{Proposition}
\theoremstyle{definition}
\newtheorem{definition}[theorem]{Definition}
\newtheorem{question}[theorem]{Question}
\font\tenscr=rsfs10  scaled \magstep1
\font\sevenscr=rsfs7  scaled \magstep1
\font\fivescr=rsfs5  scaled \magstep1
\newcommand{\C}{\mathbb{C}}
\newcommand{\D}{\mathbb{D}}
\newcommand{\Z}{\mathbb{Z}}
\newcommand{\R}{\mathbb{R}}
\newcommand{\bthm}{\begin{theorem}}
\newcommand{\ethm}{\end{theorem}}
\newcommand{\blem}{\begin{lemma}}
\newcommand{\elem}{\end{lemma}}
\newcommand{\bcor}{\begin{corollary}}
\newcommand{\ecor}{\end{corollary}}
\newcommand{\bprop}{\begin{proposition}}
\newcommand{\eprop}{\end{proposition}}
\newcommand{\bdefn}{\begin{definition}}
\newcommand{\edefn}{\end{definition}}
\newcommand{\bpf}{\begin{proof}}
\newcommand{\epf}{\end{proof}}
\newcommand{\bques}{\begin{question}}
\newcommand{\eques}{\end{question}}
\def\vep {\varepsilon}
\def \sm {\setminus}
\def\tA{\widetilde A}
\def\itemskip {\vskip 3pt plus 2 pt minus 1 pt}
\def \ma {\mathfrak{M}_A}
\def\sf{{\mathscr F}}
\newcommand{\ra}{\rightarrow}
\newcommand{\ol}{\overline}
\def\row#1#2{#1_1,\ldots, #1_{#2}}
\def\vector#1#2{(#1_1,\ldots, #1_{#2})}
\def\area{B}
\newcommand{\mcal}{\mathcal}
\def\pD{\partial \D}
\def\sf{{\mathscr F}}
\def\logai{\log|A^{-1}|}
\def\sequence#1#2{(#1_{#2})_{#2=1}^\infty}
\def\sequencecurly#1#2{\{#1_{#2}\}_{#2=1}^\infty}
\def\doubleindex#1#2#3{\{#1_{#2,#3}\}_{#3=1}^\infty}
\def\Ao{A_\omega}
\def\Xo{X_\omega}
\def\Sigmao{\Sigma_\omega}
\def\Co{\C^\omega}
\def\sO{\ol{ \mathscr O}}
\def\crx#1{C_\R(#1)}
\def \mao {\mathfrak{M}_{\Ao}} 
\def\tX{\widetilde X}
\def\eik{{E_I^k}}
\def\eck{{E_J^k}}
\begin{document}
\title[Dirichlet on its maximal ideal space]{A nontrivial uniform algebra\\ Dirichlet on its maximal ideal space}

\author{Alexander J. Izzo}
\address{Department of Mathematics and Statistics, Bowling Green State University, Bowling Green, OH 43403}
\email{aizzo@bgsu.edu}
\thanks{The author was partially supported by NSF Grant DMS-1856010.}

\subjclass[2020]{Primary 46J10, 46J15, 32A38, 32E30; Secondary 32A65, 32E20, 30H50}
\keywords{}

\begin{abstract}
It is shown$\vphantom{\widehat{\widehat{\widehat{\widehat{\widehat{\widehat{\widehat X}}}}}}}$ 
that there exists a nontrivial uniform algebra that is Dirichlet on its maximal ideal space and has a dense set of elements that are exponentials.  This answers a 65-year-old question of John~Wermer and a 17-year-old question of Garth~Dales and Joel~Feinstein.
Our example is $P(X)$ for a certain compact set $X$ in $\C^2$.  It is also shown that there exists a logmodular uniform algebra with proper Shilov boundary but with no nontrivial Gleason parts.  This answers a modification of another 65-year-old question of Wermer.
\end{abstract}

\maketitle

\vskip -2.21 true in
\centerline{\footnotesize\it Dedicated to the memory of John Wermer} 
\vskip 2.21 truein

%
%
%
%

\section{Introduction}

In the 1960 paper \cite{W0} in which John Wermer proved his embedding theorem, he raised five questions about Dirichlet algebras.  The last three of these were solved within the next eight years, but the first two have remained open.  The first of these questions also appears in the problem list from the Function Algebras conference held in Tulane in 1965 \cite[p.~348, Question~25]{Birtel} and in Andrew Browder's classic book on uniform algebras~\cite[p.~208]{Browder}.  In this paper, we answer the first question and answer a modification of the second.  We also answer a question raised by Garth Dales and Joel Feinstein \cite{DF} in 2008. 

We call a uniform algebra $A$ on a compact space $X$ \emph{trivial} if $A=C(X)$ and \emph{nontrivial} otherwise.  We call a Gleason part \emph{trivial} if it consists of a single point and \emph{nontrivial} otherwise. 
(For definitions of other terminology and notation used in this introduction see Section~\ref{notation}.)

The questions at issue are as follows.

\bques\cite[p.~381~(a)]{W0}\label{WQ1}
Let $A$ be a nontrivial uniform algebra that is Dirichlet on a compact space $X$.  Must $X$ be a proper subset of the maximal ideal space of $A$?
\eques

\bques\cite[p.~381~(b))]{W0}\label{WQ2}
Let $A$ be a Dirichlet algebra on a compact space $X$.  If $X$ is a proper subset of the maximal ideal space of $A$, can every Gleason part for $A$ be trivial?
\eques

\bques\cite[p.~1302 (2)]{DF}\label{DFQ}
Let $A$ be a uniform algebra.  If the set $\exp A = \{\exp a: a\in A\}$ of exponentials in $A$ is dense in $A$ must $A$ be trivial?
\eques

We will prove the following theorems answering Questions~\ref{WQ1} and~\ref{DFQ} and answering the modification of Question~\ref{WQ2} obtained by replacing \emph{Dirichlet} by \emph{logmodular}.

\bthm\label{main1}
There exists a nontrivial uniform algebra $A$ with maximal ideal space $X$
such that $A$ is Dirichlet on $X$.
\ethm

\bthm\label{main2}
There exists a logmodular algebra $A$ on a compact space $X$
such that $X$ is a proper subset of the maximal ideal space of $A$ but every Gleason part for $A$ is trivial.
\ethm

\bthm\label{main3}
There exists a nontrivial uniform algebra $A$
such that the set $\exp A = \{\exp a: a\in A\}$ is dense in $A$.
\ethm

We will give an example that simultaneously establishes both Theorems~\ref{main1} and~\ref{main3}.  Our example is not just an abstract uniform algebra; it is the algebra $P(X)$ of uniform limits of polynomials on some compact set $X$ in $\C^2$ of topological dimension~$1$.  Our example has the additional property of being strongly regular.  Thus we will prove the following result which contains Theorems~\ref{main1} and~\ref{main3}.

\bthm\label{Dirichlet-PX}
There exists a compact polynomially convex set $X$ in $\C^2$ of topological dimension~$1$ such that $P(X)$ is a nontrivial strongly regular uniform algebra that is Dirichlet on $X$ and has dense exponentials.
\ethm

It follows from the Arens-Royden theorem~\cite[Theorem~10.1]{S1} that the maximal ideal space of a commutative Banach algebra with dense exponentials is simply coconnected.  (We call a space $X$ \emph{simply coconnected} if its first \v Cech cohomology group $\check H^1(X;\Z)$ vanishes.) Thus the space $X$ in the above theorem is simply coconnected.  That this space $X$ also satisfies $\check H^n(X;\Z)=0$ for all $n\geq 2$ is immediate from \cite[p.~137]{HW} since the topological dimension of $X$ is $1$ (and is also immediate from \cite[Theorem~10.6]{S1} since $X$ is polynomially convex in $\C^2$).  By replacing $X$ by a nontrivial maximal set of antisymmetry for $P(X)$, we can take $X$ to be connected and hence to have trivial \v Cech cohomology in all degrees.

We will also prove more than is stated in Theorem~\ref{main2}.

\bthm\label{logmodular2}
There exists a logmodular algebra $A$, on a compact metrizable space, that has proper Shilov boundary and is such that the set $\{a^2: a\in A\}$ is dense in $A$.
\ethm

Since it is well known that there are no nontrivial Gleason parts for a uniform algebra in which the set of squares is dense (see~\cite[Lemma~1.1]{Cole} or \cite[Section~19]{S1}), Theorem~\ref{main2} is an immediate consequence of the above result.

Some historical remarks to put these theorems and the questions they answer in context are in order.  It was once a conjecture of considerable interest whether, given a uniform algebra $A$ on a compact space $X$ such that the maximal ideal space $\ma$ of $A$ is strictly larger than $X$, the set $\ma\sm X$ must contain an analytic disc.  
Andrew~Gleason~\cite{Gl} introduced his notion of parts with the hope that they would yield analytic discs in maximal ideal spaces.  (It is a consequence of the Schwarz lemma that every analytic disc in the maximal ideal space $\ma$ of a uniform algebra $A$ is a subset of some Gleason part for $A$.)  In the same paper, he also introduced the notion of a Dirichlet algebra and wrote \lq\lq It appears that this class of algebras is of considerable importance and is amenable to analysis.\rq\rq\  Two years later, Wermer proved his embedding theorem which asserts that every nontrivial Gleason part in the maximal ideal space of a Dirichlet algebra is an analytic disc~\cite{W0}.  The result left open, however, the possibility that in a Dirichlet algebra every Gleason part might consist of a single point, and thus that the maximal ideal space might contain no analytic discs.

Three years after the appearance of Wermer's embedding theorem~\cite{W0}, Gabriel~Stolzenberg \cite{Stol} disproved the conjecture about the existence of analytic discs in the maximal ideal space of a \emph{general uniform algebra} by producing a compact set $X$ in $\C^2$ such that the polynomial hull $\widehat X$ of $X$ is strictly larger than $X$ but $\widehat X\sm X$ contains no analytic discs.  Stolzenberg's result does not, however, address the question of the existence of analytic discs in maximal ideal spaces of \emph{Dirichlet} algebras.

Wermer's result that every nontrivial Gleason part for a Dirichlet algebra is an analytic disc was generalized to logmodular algebras by 
Kenneth~Hoffman~\cite{Hoffman} and then to uniform algebras with uniqueness of representing measures by 
Gunter~Lumer~\cite{Lumer}.

Theorem~\ref{main1} answers in the negative the question of whether the maximal ideal space of every nontrivial Dirichlet algebra must contain an analytic disc.  It remains open, though, whether Dirichlicity implies the existence of analytic discs when the maximal ideal space is known to be strictly larger than the Shilov boundary.
However, Theorem~\ref{main2} shows that the weaker condition of logmodularity, and hence the still weaker condition of uniqueness of representing measures, which is sufficient to insure that every nontrivial Gleason part is an analytic disc, does not imply the existence of analytic discs in these maximal ideal spaces.

By a theorem of Stuart Sidney~\cite[Theorem~3]{Sidney}, if a point having a unique representing measure lies in a trivial Gleason part, then there are no nonzero bounded point derivations at that point.  Therefore, in view of Lumer's generalization of Wermer's embedding theorem, for a uniform algebra with uniqueness of representing measures, the conditions that (i) the maximal ideal space contains no analytic discs, (ii) the uniform algebra has no nontrivial Gleason parts, and (iii) there are no nonzero bounded point derivations on the uniform algebra, are all equivalent.  In particular, there are no nonzero bounded point derivations on a logmodular algebra satisfying the conditions in Theorem~\ref{main2}.  Actually, for the particular logmodular algebra we construct, the nonexistence of nonzero bounded point derivations is clear because, as is well known, there are no nonzero bounded point derivations on a uniform algebra with dense squares \cite[Lemma~1.1]{Cole}.

The question of Dales and Feinstein about uniform algebras with dense exponentials was also motivated by the consideration of analytic structure  in maximal ideal spaces.  The condition that a uniform algebra has dense invertible group is strictly stronger than the condition that its maximal ideal space contains no analytic discs.  This suggests the conjecture, raised as a question by Thomas~Dawson and Feinstein~\cite{DawsonF} that a uniform algebra with proper Shilov boundary can never have dense invertible group.
Dales and Feinstein \cite[Theorem~2.1]{DF} gave a counterexample.  Their result led them to ask whether a uniform algebra with proper Shilov boundary can have dense \emph{exponentials}, and in turn, to ask whether there even exists a nontrivial uniform algebra with dense exponentials.

Theorem~\ref{main1} should be compared with
Lavrentiev's theorem 
\cite[Theorem~2.11]{AW} which asserts that if $K$ is a compact planar set with empty interior and connected complement, then $P(K)=C(K)$.  The hypotheses imply that the maximal ideal space of $P(K)$ is $K$, and a standard proof of Lavrentiev's theorem consists of proving that under the hypotheses $P(K)$ is Dirichlet on $K$, and then using the Dirichlicity to show that $P(K)=C(K)$.  Thus the statement that every uniform algebra Dirichlet on its maximal ideal space is trivial, if it were true, could be regarded as a generalization of Lavrentiev's theorem.  Theorem~\ref{main1} shows that Lavrentiev's theorem does not generalize in this manner.  For another perspective on the relationship of our results to Lavrentiev's theorem, note that a compact planar set $K$ has empty interior and connected complement if and only if the topological dimension $\dim K$ of $K$ is at most $1$ and $K$ is simply coconnected.  (That a planar set has topological dimension at most $1$ if and only if its interior is empty is a standard result in dimension theory \cite[Theorem~IV~3]{HW}.
That a compact planar set $K$ has connected complement if and only if $\check H^1(K;\Z)=0$ is an immediate consequence of Alexander-Pontryagin duality.  A more elementary proof is given in \cite{Browdermonthly}.)  Theorem~\ref{Dirichlet-PX} (and the remark that immediately follows it) shows that these two topological conditions on a compact metrizable space $K$ are \emph{not} sufficient to imply that every uniform algebra with maximal ideal space $K$ is trivial.  However, not every compact metrizable,
simply coconnected space of topological dimension~$1$ 
embeds in the plane; we give an example at the end of the paper.  These observations suggest the following open question.

\begin{question}
Let $K$ be a compact planar set with empty interior and connected complement.  Is every uniform algebra with maximal ideal space $K$ trivial?
\end{question}

This question is of course related to the 68-year-old question of I.~M.~Gelfand \cite{Gelfand} of
whether every uniform algebra with maximal ideal space the closed unit interval $[0,1]$ is trivial.  In connection with this, it should be noted that the uniform algebra in Theorem~\ref{Dirichlet-PX} is the first known example of a nontrivial uniform algebra whose maximal ideal space is both simply coconnected and of topological dimension~$1$.  However, unlike the closed unit interval, the space $X$ in the theorem fails to be locally connected, since a uniform algebra with dense exponentials necessarily has dense squares, and hence, if defined on a locally connected space, must be trivial by a result of \v Cirka \cite[Theorem~13.15 and Lemma~13.16]{S1}.  Thus Theorem~\ref{Dirichlet-PX} leaves open the possibility that every uniform algebra whose maximal ideal space is locally connected, simply coconnected, and of topological dimension~$1$ is trivial.

It is well known that if a uniform algebra $A$ is Dirichlet on a compact metrizable space $X$, then every point of $X$ is a peak point for $A$.  Thus Theorem~\ref{Dirichlet-PX} strengthens the celebrated theorem of Brian~Cole
that there exist nontrivial uniform algebras such that every point of the maximal ideal space is a peak point~\cite{Cole}
(see also~\cite[Section~19]{S1} and~\cite[Appendix]{Browder}).
Indeed Cole's construction was a source of inspiration for our proofs.

It was conjectured in the 1960s that no nontrivial strongly regular uniform algebras exist.  Donald Wilken~\cite{Wilken2} proved that there are no nontrivial strongly regular uniform algebras on the closed unit interval.  Subsequently, Donald Chalice~\cite{Chalice} proved that the same holds for uniform algebras on the unit circle.   The first example of a nontrivial strongly regular uniform algebra was given by Fein\-stein~\cite{F1} in 1990.  Theorem~\ref{Dirichlet-PX} shows that Wilken's theorem does not generalize from the closed unit interval to simply coconnected spaces of topological dimension~$1$.

A regular uniform algebra is never contained in a maximal uniform algebra \cite[p.~190]{Hoffmanbook}.  Thus Theorem~\ref{Dirichlet-PX} gives what seems to be the first known example of a Dirichlet algebra contained in no maximal uniform algebra.  The standard examples of Dirichlet algebras are, in fact, maximal, although there is a Dirichlet algebra found by Browder and Wermer \cite{BW} (see also \cite[p.~232-235]{Browder}) that is a proper subalgebra of the disc algebra and hence not maximal.  The question of whether there exists a maximal uniform algebra whose maximal ideal space coincides with its Shilov boundary appears in the problem list from the 1965 Function Algebras 
conference~\cite[p.~348, Question~23]{Birtel}.  It is easy to show that if $A$ is a  uniform algebra such that every point of the maximal ideal space of $A$ is a peak point, and $B$ is a uniform algebra that contains $A$, then the maximal ideal space of $B$ coincides with the maximal ideal space of $A$ (see \cite[Lemma~2.2]{Izzo1995}).  It follows that if there is a uniform algebra that is Dirichlet on its maximal ideal space and is contained in a maximal uniform algebra $B$, then $B$ gives an affirmative answer to the question just mentioned.

Being a compact metrizable space of topological dimension~$1$, the space $X$ of Theorem~\ref{Dirichlet-PX} embeds in $\R^3$ 
\cite[Theorem~V~2]{HW}.  It follows easily that on every compact manifold $M$ (with or without boundary) of dimension greater than or equal to $3$, there exists a nontrivial uniform algebra with maximal ideal space $M$ that is Dirichlet on $M$.  Using a construction of Feinstein and the author \cite{FI}, we will show that such a uniform algebra can in addition be essential.  

\bthm\label{three-ball}
On every compact manifold $M$ (with or without boundary) of dimension $\geq$
$3$, there exists an essential, strongly regular uniform algebra with maximal ideal space $M$ that is Dirichlet on $M$.
\ethm

Before concluding this introduction we make some remarks on the proofs of our main theorems.  The uniform algebra constructed in the proof of Theorem~\ref{logmodular2} is a Cole root extension of the disc algebra.  However, to obtain a uniform algebra that we can prove is logmodular, we must carry out the construction in a special way.  

Usually to obtain a uniform algebra with dense squares on a compact metrizable space via a Cole root extension one proceeds as follows~\cite{Cole} (or see~\cite[Section~19]{S1}).  
Let $A_0$ be a uniform algebra on a compact metrizable space $X_0$, and let $\sf_0$ be a countable dense subset of $A_0$.  Give $\C^{\sf_0}$ the product topology.  Let $p_1:X_0\times \C^{\sf_0}\ra X_0$ and $p_f:X_0\times \C^{\sf_0}\ra\C$ denote the projections given by $p_1(x, (z_g)_{g\in \sf_0})=x$ and $p_f(x, (z_g)_{g\in\sf_0})=z_f$.  Define $X_1\subset X_0\times \C^{\sf_0}$ by
\[
(*)\qquad  X_1 = \{\, y\in X_0\times \C^{\sf_0}:\bigl(p_f(y)\bigr)^2=f\bigl(p_1(y)\bigr) \ \hbox{for all $f\in\sf_0$}\,\},\phantom{(*)}
\]
and let $A_1$ be the uniform algebra on $X_1$ generated by the set of functions $\{\, f\circ p_1: f\in A_0\} \cup \{\, p_f: f\in\sf_0\}$.    The map $\pi_1=p_1|X_1$ is surjective, and there is an isometric embedding $\pi_1^*:A_0\ra A_1$ given by $\pi_1^*(f)=f\circ \pi_1$, so we can regard $A_0$ as a subalgebra of $A_1$.  Since $(p_f|X_1)^2=f\circ\pi_1$, then each function $f\in\sf_0$ has a square root in $A_1$.  One then iterates this construction to obtain a direct system $\{(A_m, X_m, \pi_m)\}_{m=0}^\infty$, and one takes the limit of this direct system to obtain the desired final uniform algebra $A_\omega$.

To prove Theorem~\ref{logmodular2}, one would like to show that if one applies the above construction taking $A_0$ to be the disc algebra, then $A_\omega$ is logmodular.  However, our proof of logmodularity will use results about uniform algebras of limits of holomorphic functions on a Riemann surface and consequently will require that in the direct system $\{(A_m, X_m, \pi_m)\}_{m=0}^\infty$ each of the $A_m$ is (essentially) such a uniform algebra.  This precludes us from taking the collection $\sf_0$ (and the corresponding collections as the iteration proceeds) to be infinite, and hence precludes $\sf_0$ from being dense.  Therefore, instead, at each step of the iteration we create a square root for only a single function.  We must then choose these functions in such a way that there is a strictly increasing  infinite sequence 
of positive integers $(n_k)_{k=1}^\infty$
such that for each $n_k$ there is a countable dense set of functions in $A_{n_k}$ each of which is eventually given a square root.

The construction in the proof of Theorem~\ref{Dirichlet-PX} also involves a sequence of uniform algebras on spaces that are (essentially) Riemann surfaces.  However, since in this case we are aiming for dense exponentials rather than dense squares, we can no longer use root extensions.  One would like to simply replace, in equation $(*)$, the equality $\bigl(p_f(y)\bigr)^2=f\bigl(p_1(y)\bigr)$ by $\exp\bigl(p_f(y)\bigr) = f\bigl(p_1(y)\bigr)$, but of course the resulting space $X_1$ would then not be compact.  To obtain a compact space we cut this set off by requiring that the imaginary part of $p_f(y)$ lie in some chosen closed, bounded interval.  One of the key issues in the proof is to show that by choosing this interval large enough we can make the resulting uniform algebra nontrivial, and furthermore, insure that nontriviality persists when we take the direct limit to obtain the final uniform algebra.  In the Cole construction, nontriviality is proven using an averaging operator, an object which we do \emph{not} have in the construction we use to prove Theorem~\ref{Dirichlet-PX}.

In the next section we present terminology and notation some of which has already been used above.   Section~\ref{lemmas} consists of lemmas we will need.  Sections~\ref{logmodular2-section}, \ref{Dirichlet-PX-section}, and \ref{three-ball-section} are devoted to the proofs of Theorems~\ref{logmodular2}, \ref{Dirichlet-PX}, and \ref{three-ball}, respectively.  As already mentioned, in the last section we give an example of a compact metrizable,
simply coconnected space of topological dimension~$1$ that does not embed in the plane.

The author thanks Lee Stout and Paul Gauthier for reading a draft of the paper and making comments that led to expository improvements.  An especially big thank you is due to Anthony O'Farrell who checked every detail of the proofs and provided many helpful comments.

It is with a mixture of joy and sorrow that I dedicate this paper to the memory of John Wermer.  Sorrow, of course, that he is no longer with us; joy that I had the privilege of knowing and working with him.

%
%
%
%

\section{Terminology and Notation}\label{notation}

We give here some terminology and notation we will use.  Readers well versed in uniform algebra concepts can safely skip most of this but should probably read at least the next two paragraphs and especially the last three paragraphs.

We denote the real and imaginary parts of a complex number or function $z$ by $\Re z$ and $\Im z$, respectively.
We denote the set of positive integers by $\Z_+$.

Throughout the paper the word \emph{space} will mean \emph{Hausdorff space}.  In fact, all the spaces that arise in this paper are metrizable.  
By the topological dimension of a space we mean the usual Lebesgue covering dimension.

Let $X$ be a compact space.  We denote by $\crx X$ the (real) algebra of all continuous real-valued functions on $X$ and by
$C(X)$ the (complex) algebra of all continuous complex-valued functions on $X$, each with the supremum norm
$ \|f\| = \|f\|_X = \sup\{ |f(x)| : x \in X \}$.  A \emph{uniform algebra} $A$ on $X$ is a closed subalgebra of $C(X)$ that contains the constant functions and separates
the points of $X$.  

For $A$ a uniform algebra on a compact space $X$, a point $x$ of $X$ is said to be a \emph{peak point} for $A$ if there is a function $f$ in $A$ such that $f(x)=1$ and $|f(y)|<1$ for every $y\in X\sm \{x\}$.
The \emph{Shilov boundary} for $A$ is the smallest closed subset $\Gamma$ of $X$ such that every function in $A$ takes its maximum modulus on $\Gamma$.

The uniform algebra $A$ is said to be \emph{Dirichlet on $X$} if the set 
$\Re(A)=\{\Re f : f \in A\}$ of real parts of the functions in $A$ is dense in $\crx X$.  
It is well known that
if $A$ is Dirichlet on a compact metrizable space $X$, then every point of $X$ is a peak point for $A$, so $X$ is the Shilov boundary for $A$.  Thus if a uniform algebra $A$ is Dirichlet on its maximal ideal space, then the Shilov boundary for $A$ and the maximal ideal space of $A$ coincide.  A uniform algebra that is Dirichlet on its Shilov boundary is referred to simply as a \emph{Dirichlet algebra}.

We denote the set of invertible elements of $A$ by $A^{-1}$.  The uniform algebra $A$ is \emph{logmodular on $X$} if the set $\log|A^{-1}| = \{ \log |f| : f\in A^{-1}\}$ is dense in $\crx X$.   The statements in the previous paragraph about peak points and the Shilov boundary for Dirichlet algebras hold also for logmodular algebras.  A uniform algebra that is logmodular on its Shilov boundary is referred to simply as a \emph{logmodular algebra}.
Dirichlet algebras are logmodular. 

A uniform algebra $A$ on $X$ is said to be 
\itemskip
\begin{enumerate}
\item[(a)] \emph{natural} if the maximal ideal space of $A$ is $X$ (under the usual identification of a point of $X$ with the corresponding multiplicative linear functional),
\itemskip
\item[(b)] \emph{essential} if there is no proper closed subset $E$ of $X$ such that $A$ contains every continuous complex-valued function on $X$ that vanishes on $E$,
\itemskip
\item[(d)] \emph{regular on $X$} if for each closed set $K_0$ of $X$ and each point $x$ of $X\setminus K_0$, there exists a function $f$ in $A$ such that $f(x)=1$ and $f=0$ on $K_0$,
\itemskip
\item[(e)] \emph{normal on $X$} if for each pair of disjoint closed sets $K_0$ and $K_1$ of $X$, there exists a function $f$ in $A$ such that $f=1$ on $K_1$ and $f=0$ on $K_0$,
\end{enumerate}
The uniform algebra $A$ on $X$ is \emph{regular} or \emph{normal}, if $A$ is natural and is regular on $X$ or normal on $X$, respectively.
In fact, every regular uniform algebra is normal \cite[Theorem~27.2]{S1}.
Also, if a uniform algebra $A$ is normal on $X$, then $A$ is necessarily natural \cite[Theorem~27.3]{S1}.

Let $A$ be a uniform algebra on $X$, and let $x\in X$.  We define the ideals $M_x(A)$ and $J_x(A)$ by
\begin{align} 
M_x(A) &= \{\, f\in A:  f(x)=0\,\}  \nonumber  \\ 
\intertext{and} 
J_x(A)&= \{ \, f\in A: f^{-1}(0)\ \hbox{contains a neighborhood of $x$ in $X$}\}. \nonumber 
\end{align}
The uniform algebra $A$ is \emph{strongly regular at} $x$ if $\ol{J_x(A)}=M_x(A)$, and $A$ is \emph{strongly regular} if $A$ is strongly regular at every point of $X$.  It was shown by Wilken that every strongly regular uniform algebra is normal \cite[Corollary~1]{Wilken2}.

The \emph{Gleason parts} for the uniform algebra $A$ are the equivalence classes in the maximal ideal space of $A$ under the equivalence relation $\varphi\sim\psi$ if $\|\varphi-\psi\|<2$ in the norm on the dual space $A^*$.  (That this really is an equivalence relation is well known but {\it not\/} obvious.)

Given a point $\varphi$ in the maximal ideal space of the uniform algebra $A$ on $X$, an \emph{Arens-Singer measure} for $\varphi$ is a positive measure $\mu$ on $X$ such that
$$\log |\varphi(f)| = \int \log |f| \, d\mu$$
for every $f\in A^{-1}$.
Every Arens-Singer measure for $\varphi$ is a representing measure for $\varphi$.

For $X$ a compact subset of $\C^N$, $1\leq N\leq \omega$,
we denote by 
$P(X)$ the uniform closure on $X\subset\C^N$ of the polynomials in the complex coordinate functions $z_1, z_2, \ldots$, and we denote by $R(X)$ the uniform closure on $X$ of the rational functions  holomorphic on (a neighborhood of) $X$.  Just as in a finite-dimensional complex Euclidean space, a polynomial on the countably infinite cartesian product $\Co$ is a finite sum of finite products of coordinate functions, and a rational function is a quotient of two polynomials.

We will make use of (purely) one-dimensional complex submanifolds of $\C^N$ without boundary.  We will refer to such a space as a \emph{possibly disconnected open Riemann surface}.

Given a possibly disconnected open Riemann surface $\Sigma$ and a compact subset $K$ of $\Sigma$, we denote by $\sO(K)$ the algebra of complex-valued functions on $K$ that can be approximated uniformly by functions holomorphic on a neighborhood of $K$ in $\Sigma$, we denote by $H(K)$ the set of real-valued functions on $K$ that extend to be harmonic on a neighborhood of $K$ in $\Sigma$, and we denote the uniform closure of $H(K)$ in $\crx K$ by $\ol H(K)$.

Given $1\leq N\leq M\leq \omega$ and a function $f$ defined on a subset $X$ of $\C^N$, it will often be convenient to regard $f$ as a function on a subset $Y$ of $\C^M$ lying over $X$ under the the canonical projection $\C^M\ra\C^N$ by identifying $f$ with the function obtained by precomposing with this projection.  This will be freely done below, often without comment.  In particular, this will allow us to regard certain algebras of functions on $X$ as subalgebras of certain other algebras of functions on $Y$.

%
%
%
%

\section{Lemmas}\label{lemmas}

\blem\label{density}
Let $\Sigma$ be a possibly disconnected open Riemann surface, let $K$ be a compact subset of $\Sigma$, and let $\partial K$ be the boundary of $K$ in $\Sigma$.  Suppose that every point of $\partial K$ has a unique Arens-Singer measure for $\sO(K)$.  Then the real-linear span of $\log|\sO(K)^{-1}|$, restricted to $\partial K$, is dense in $\crx{\partial K}$
\elem

For the proof of Lemma~\ref{density} we recall some material from the paper \cite{GR} of Theodore~Gamelin and Hugo~Rossi.  There only (\emph{connected}) open Riemann surfaces are considered, but the case of possibly disconnected open Riemann surfaces $\Sigma$ follows immediately from the connected case.  Given a compact set $K$ on $\Sigma$, and a point $p$ in $K$, there is a certain (unique) measure $\nu_p$ associated with $p$ called the \emph{Keldysh measure for} $p$.  It is a positive measure on $\partial K$ that represents $p$ on $H(K)$, i.e., $\int u\, d\nu_p = u(p)$ for every $u\in H(K)$.  

\blem\cite[Lemma~3.1]{GR}\label{GR1}
Let $K$ be a compact subset of a possibly disconnected open Riemann surface.  A Borel measure $\mu$ on $\partial K$ is an Arens-Singer measure for a point $p$ of $K$ on $\sO(K)$ if and only if it represents $p$ on $H(K)$.
\elem

\blem\cite[Corollary~6.3]{GR}\label{GR2}
Let $K$ be a compact subset of a possibly disconnected open Riemann surface.  Let $P$ denote the set of points $p\in K$ such that the Keldysh measure $\nu_p$ is the unit point mass at $p$.  If $\lambda$ is a measure on $P$ that annihilates $H(K)$, then $\lambda=0$.
\elem

The set $P$ in the preceding lemma is, in fact, the Choquet boundary for $\ol H(K)$ \cite[Corollary~6.4]{GR}, and hence is a $G_\delta$-set,

The next lemma is immediate from the proof of \cite[Lemma~1]{W1}.

\blem\label{W}
Let $K$ be a compact subset of a possibly disconnected open Riemann surface.  Every function in $H(K)$ is in the real-linear span of $\log|\sO(K)^{-1}|$.
\elem

\bpf[Proof of Lemma~\ref{density}]
Let $p\in \partial K$ be arbitrary.  By Lemma~\ref{GR1}, the Keldysh measure $\nu_p$ for $p$ is an Arens-Singer measure for $p$ on $\sO(K)$.  Thus, by our hypothesis, $\nu_p$ is the unit point mass at $p$.  Thus $\partial K$ is contained in the set $P$ of Lemma~\ref{GR2}.  Therefore, if $\lambda$ is a measure on $\partial K$ that annihilates $H(K)$, then $\lambda=0$ by Lemma~\ref{GR2}.  Consequently, $H(K)$, restricted to $\partial K$, is dense in $\crx {\partial K}$.  But by Lemma~\ref{W}, $H(K)$ is contained in the real-linear span of $\log|\sO(K)^{-1}|$.  Thus the real-linear span of $\log|\sO(K)^{-1}|$, restricted to $\partial K$, is dense in $\crx {\partial K}$.
\epf

\blem\label{trick}
Let $A$ be a uniform algebra on a compact space $X$.  Suppose that there is a dense subset $\sf$ of $A$ such that every function in $\sf$ has a square root in $A$.  Suppose also that the real-linear span of $\logai$ is dense in $\crx X$.  Then $\logai$ itself is dense in $\crx X$.
\elem

\bpf
It suffices to show that the closure $\ol{\logai}$ of $\logai$ is a real-linear space.

First note that $\logai$ is closed under sums; given $f$ and $g$ in $A^{-1}$, we have $\log|f| + \log|g| = \log|fg|$, and $fg$ is in $A^{-1}$.  
Consequently, $\ol{\logai}$ is also closed under sums.

Next we show that $\ol{\logai}$ is closed under division by $2$.  Given $f\in \sf\cap A^{-1}$, by hypothesis there is $h\in A$ such that $h^2=f$.  Note that $h$ is in $A^{-1}$, and $\log|h| =(1/2)\log|f|$.  Thus division by $2$ takes $\log|\sf\cap A^{-1}|$ into $\logai$.  Since the invertible elements form an open subset in any Banach algebra, we have that $\sf\cap A^{-1}$ is dense in $A^{-1}$, and hence $\log|\sf\cap A^{-1}|$ is dense in $\logai$.  It follows that division by $2$ takes $\ol{\logai}$ into itself.  Consequently the same is true for division by $2^n$ for any positive integer $n$.  Since for every $f$ in $A^{-1}$ and positive integer $m$ we have $\log|f^m|=m\log|f|$ and $\log|f^{-1}|=-\log|f|$, we see that multiplication by any integer takes $\ol{\log|A^{-1}|}$ into itself.  We have now shown that for every dyadic rational $r$ and every function $u$ in $\ol{\logai}$, the function $ru$ is also in $\ol{\logai}$.  It follows that the same is true with the dyadic rational $r$ replaced by an arbitrary real number.  Thus $\ol{\logai}$ is a real-linear space, as desired.
\epf

The next lemma refines \cite[Theorem~1.2]{Izzo1}.  It will be used in the proof of Theorem~\ref{Dirichlet-PX} to obtain that certain sets have piecewise smooth boundaries thereby assuring us of the applicability of Stokes' theorem.
Here, and throughout the paper, $\D$ denotes the open unit disc in the complex plane, and
given a disc $D$, we denote the radius of $D$ by $r(D)$.

\blem\label{specialK}
For each $r>0$, there exists a sequence of open discs $\sequencecurly Dk$ such that the following conditions hold:
\itemskip
\begin{enumerate}
\item[(i)] $\sum_{k=1}^\infty r(D_k)<r$,
\itemskip
\item[(ii)] setting $K=\ol \D\setminus \bigcup_{k=1}^\infty D_k$, the uniform algebra $R(K)$ is nontrivial and strongly regular,
\itemskip
\item[(iii)] each of the sets $\ol \D\sm (D_1\cup \cdots \cup D_n)$, $n\in \Z_+$, has piecewise smooth boundary.
\end{enumerate}
\elem

\bpf
That there exists a sequence $\sequencecurly Dk$ such that conditions~(i) and~(ii) are satisfied is \cite[Theorem~1.2]{Izzo1}.  We explain here how to refine the construction given in \cite{Izzo1} to obtain condition~(iii) as well.

Without loss of generality we assume $r<1$.  The sequence $\sequencecurly Dk$ in \cite[Theorem~1.2]{Izzo1} is obtained by choosing two sequences of open discs $\{D_k^I\}_{k=1}^\infty$ and $\{D_k^W\}_{k=1}^\infty$ such that $\sum r(D_k^I)<r/2$ and $\sum r(D_k^W)<r/2$ and such that setting $K_1=\ol \D\setminus \bigcup D_k^I$ and $K_2=\ol \D\setminus \bigcup D_k^W$ we have $\ol{J_x(R(K_1))} \supset M_x(R(K_1))^2$ 
for all $x\in K_1$ and 
$\ol{M_x(R(K_2))^2}=M_x(R(K_2))$ for all $x\in K_2$, and then taking $\{D_k\}$ to be an enumeration of the collection of discs $\{D_k^I\} \cup \{D_k^W\}$.  It is shown in \cite{Izzo1} that then conditions~(i) and~(ii) hold.  That condition~(iii) can be made to hold as well will be established once we show that the collection $\{D_k^I\} \cup \{D_k^W\}$ can be chosen such that no three circles in the collection $\{\pD\} \cup \{\pD_k^I\} \cup \{\pD_k^W\}$ have a point in common and each intersection between a pair of circles in this collection is a transverse intersection.

The discs $\{D_k^W\}$ were found by Wermer in \cite{W2}.  These discs have disjoint closures and their closures are disjoint from $\pD$, so the corresponding bounding circles are all disjoint.  Thus we need only to show that the discs $\{D_k^I\}$ can be chosen such that their boundaries have only transverse intersections with each other and with the circles in the collection $\{\pD\} \cup \{\pD_k^W\}$.  

The discs $\{D_k^I\}$ were chosen in \cite{Izzo1} as follows.  For each $n\in \Z_+$, we let $\vep_n>0$, $\eta_n>0$, and $\sigma_n=\sigma(\vep_n, \eta_n)>0$ be as in the proof of \cite[Theorem~3.5]{Izzo1}.  The square $[-1,1]\times[-1,1]\supset \ol \D$ can be expressed as the union of $M_n\leq\bigl( \lceil 2/\sigma_n \rceil \bigr)^2 \leq 9/\sigma_n^2$ squares $S_n^1,\ldots, S_n^{M_n}$ of side length $\sigma_n$.  Each such square $S_n^j$ is contained in the open disc $\Delta(a, \sigma_n)$ with radius $\sigma_n$ and center $a=a_n^{(j)}$, where $a_n^{(j)}$ is any point within a distance less than $(1-\sqrt{2}/2)\sigma_n$ of the center of the square $S_n^j$.  To the disc $\Delta(a, \sigma_n)$ is associated, by \cite[Corollary~3.3]{Izzo1}, a sequence of open discs $\{D_k^a\}$.  The collection $\{D_k^I\}$ is the union of all the collections $\{D_k^a\}$ as $n$ ranges over $\Z_+$ and we consider each square $S_n^1,\ldots, S_n^{M_n}$.  In \cite{Izzo1} each point $a=a_n^{(j)}$ was taken (somewhat implicitly) to be the center of the corresponding square $S_n^j$.  Our desired transversality can be achieved by (possibly) making other choices.  If the center $a$ of the disc $\Delta(a,\sigma_n)$ is translated, the associated open discs $\{D_k^a\}$ are translated together along with $a$.  Enumerate all the squares $\{S_n^j\}$.  We will choose the points $a=a_n^{(j)}$ inductively.

Given two circles in the plane, one of which is fixed and the other of which is to be translated, the set of vectors by which the second circle can be translated so as to have non-transverse intersection with the first is the union of two concentric circles (or one circle and a point in the case when the fixed circle and circle to be translated have the same radius) and hence is a set of measure zero in the plane.  Since there are only a countable number of circles involved, it follows that at each stage in the induction, we can choose the new center $a_n^{(j)}$ in such a way that each of the new boundary circles associated with the discs $\{D_k^{a_n^{(j)}}\}$ has only transverse intersections with $\pD$ and with each of the boundary circles associated with $\{D_k^W\}$ and with boundary circles associated with the discs of the form $D_k^I$ chosen earlier in the induction.  Thus the desired transversality can be achieved.
\epf

\blem\label{uniqueness}
Let $K$ be a compact planar set such that $R(K)$ is normal.  Then the only Arens-Singer measures for $R(K)$ are the point masses.
\elem

\bpf
Since $R(K)$ is normal, $K$ has empty interior, and hence $R(K)$ has dense invertibles.  It is well known that on a normal uniform algebra the only \emph{Jensen} measures are the point masses (see the proof of \cite[Theorem~27.3]{S1}).  It is easily shown, by an application of Fatou's lemma, that on a uniform algebra with dense invertibles, every Arens-Singer measure is a Jensen measure
\epf

The following lemma is a variation on \cite[Lemma~2.1]{CGI}, which is in turn a generalization of the theorem that for $X$ a compact set in $\C^n$, the uniform algebra $R(X)$ is generated by $n+1$ functions.

\begin{lemma}\label{generators}
Let $A$ be a uniform algebra on a compact space $X$.  Suppose that $A$ is generated by a sequence of functions ${\mcal F}=\{f_1, f_2, \ldots\}$.  Suppose also that there is a positive integer $N$ such that for each $n>N$, the function $f_n$ either is the inverse of a function in the uniform algebra $[\row f{{n-1}}]$ generated by $\row f{n-1}$ or is a logarithm of a function in the uniform algebra 
$[\row f{{n-1}}]$.  Then $A$ is generated by $N+1$ functions.
\end{lemma}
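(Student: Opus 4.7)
My approach is to construct a single function $h\in A$ as a rapidly convergent weighted series $h=\sum_{n>N}\epsilon_n f_n$ and then to show by induction that each $f_n$ lies in the closed subalgebra $B:=[f_1,\ldots,f_N,h]$. Since $\{f_n\}$ generates $A$, this will yield $A=B$, so $A$ is generated by the $N+1$ functions $f_1,\ldots,f_N,h$.

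The scalars $\epsilon_n>0$ will be chosen inductively to satisfy two families of conditions: (i) absolute convergence $\sum_n\epsilon_n\|f_n\|<\infty$, so that $h$ is a well-defined element of $A$; and (ii) at each stage $n>N$, the ``tail'' $T_n:=\sum_{k>n}(\epsilon_k/\epsilon_n)f_k$ has norm small enough --- specifically, $\|g_n T_n\|<1$ in Case~1 and $\|T_n\|<\log 2$ in Case~2 --- for a Neumann or logarithmic power series to converge in $A$. Each stage introduces one new tail bound to join the finitely many previously active ones and the overall convergence requirement, but a routine diagonal-type selection (choosing $\epsilon_{n+1}$ sufficiently small after $\epsilon_n$ has been fixed, at which point $g_n$ is already determined by $f_1,\ldots,f_{n-1}$) accommodates all of these simultaneously.

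For the induction, $f_1,\ldots,f_N\in B$ trivially. Assuming $f_1,\ldots,f_{n-1}\in B$, I would form
\[R_n:=\epsilon_n^{-1}\Big(h-\sum_{k=N+1}^{n-1}\epsilon_k f_k\Big)=f_n+T_n,\]
which lies in $B$ by the induction hypothesis. In Case~1, where $f_n=g_n^{-1}$ with $g_n\in[f_1,\ldots,f_{n-1}]\subseteq B$, I would compute $g_nR_n=1+g_nT_n$; with $\|g_nT_n\|<1$ the Neumann series $(g_nR_n)^{-1}=\sum_{k\ge 0}(-g_nT_n)^k$ converges in the closed subalgebra $B$, and then $f_n=g_n^{-1}=R_n(g_nR_n)^{-1}\in B$. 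In Case~2, where $f_n=\log g_n$ with $g_n\in[f_1,\ldots,f_{n-1}]\subseteq B$, I would note that $\exp(-R_n)\in B$ (since $B$ is closed under the entire function $\exp$) and multiply to obtain $g_n\exp(-R_n)=\exp(-T_n)\in B$; with $\|T_n\|<\log 2$ the series $-T_n=\log\bigl(1+(\exp(-T_n)-1)\bigr)$ converges in $B$, so $T_n\in B$ and hence $f_n=R_n-T_n\in B$.

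The algebraic manipulations in the two cases are clean. The main nontrivial obstacle is the organizational one: the tail bound needed at stage $n$ involves $\|g_n\|$, which is not known until the earlier $\epsilon_k$'s have been fixed, and each stage adds a new constraint that must coexist with the overall convergence of $h$ and with all previously active constraints. I expect the bulk of the detail-writing to lie in carefully executing this inductive selection of the $\epsilon_n$'s, though the existence of a compatible choice is standard once the constraints are laid out.
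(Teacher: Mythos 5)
Your proposal is correct and follows essentially the same strategy as the paper: form a single weighted series $\sum_{n>N}c_nf_n$ with rapidly decaying coefficients, then show by induction that each $f_n$ lies in the closed subalgebra generated by $f_1,\ldots,f_N$ and this series, splitting into the inverse case and the logarithm case. The only differences are cosmetic (you invoke a Neumann series and the $\log(1+x)$ power series with a $\log 2$ tail bound, where the paper uses a direct invertibility estimate and a $\pi$ bound ensuring the exponential's range avoids $(-\infty,0]$).
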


In the proof we will use of the following very well-known, elementary fact \cite[Theorem~1.2.1]{Browder}.

\begin{proposition}\label{elem}
Let $B$ be a Banach algebra with identity element $e$.  If $f\in B$, $\lambda\in \C$, and $\|f\|<|\lambda|$, then $\lambda e-f$ is invertible in $B$.
\end{proposition}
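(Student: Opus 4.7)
The plan is to prove invertibility of $\lambda e - f$ by the classical Neumann series argument. First I would observe that the hypothesis $\|f\|<|\lambda|$ forces $|\lambda|>0$, so we may factor
\[
\lambda e - f = \lambda\bigl(e - \lambda^{-1}f\bigr),
\]
and it suffices to show that $e - g$ is invertible in $A$ whenever $\|g\|<1$, where we set $g=\lambda^{-1}f$ so that $\|g\|=\|f\|/|\lambda|<1$.

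Next I would construct a candidate inverse as the Neumann series $s=\sum_{n=0}^{\infty}g^n$. The key step is to verify that this series converges in $A$. Because multiplication in a Banach algebra satisfies $\|g^n\|\le\|g\|^n$, the partial sums of $\sum\|g^n\|$ are dominated by the convergent geometric series $\sum\|g\|^n=1/(1-\|g\|)$. Hence the series is absolutely convergent, and completeness of $A$ gives that the partial sums $s_N=\sum_{n=0}^{N}g^n$ form a Cauchy sequence converging to some $s\in A$.

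Then I would verify that $s$ is a two-sided inverse of $e-g$. A direct telescoping computation gives
\[
(e-g)s_N = s_N(e-g) = e - g^{N+1},
\]
and since $\|g^{N+1}\|\le\|g\|^{N+1}\to 0$, passing to the limit (using continuity of left and right multiplication by the fixed element $e-g$) yields $(e-g)s=s(e-g)=e$. Therefore $e-g$ is invertible, and consequently
\[
\lambda e - f = \lambda(e-g)
\]
is invertible in $A$ with inverse $\lambda^{-1}s$. There is no real obstacle here; the only point to be careful about is that $|\lambda|>\|f\|\ge 0$ ensures $\lambda$ itself is a nonzero scalar and thus invertible, so that the factorization and the final scaling by $\lambda^{-1}$ are legitimate.
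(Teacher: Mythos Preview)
Your proof is correct and is precisely the standard Neumann series argument. The paper does not actually prove this proposition; it simply quotes it as a ``very well known elementary fact'' with a citation to \cite[Theorem~1.2.1]{Browder}, so your argument is the expected one and there is nothing further to compare.
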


\begin{proof}[Proof of Lemma~\ref{generators}]
The proof is similar to the proof of \cite[Lemma~2.1]{CGI} but somewhat easier.

We will show that if $(c_n)$ is a sequence of positive numbers decreasing to zero sufficiently rapidly, and if $b_{N+1}=\sum_{n=N+1}^\infty c_nf_n$, then $A$ is generated by the functions $f_1,\ldots, f_N, b_{N+1}$.  For $n>N$, in the case when $f_n$ is the inverse of a function in 
 $[\row f{{n-1}}]$, set $h_n = f^{-1}_n$.
Otherwise $f_n$ is a logarithm of a function $g_n$ in $[\row f{{n-1}}]$.
Now choose a sequence $\{c_n: n\geq N+1\}$ of constants such that:
\begin{itemize}
\item[(i)] $c_n > 0$ for $n \geq N+1$.
\smallskip
\item[(ii)] $c_n \|f_n\| < 2^{-n}$ for $n \geq N+1$.
\smallskip
\item[(iii)] 
$c_n \|f_n h_k\| < 2^{-n} c_k$ for $n > k \geq N+1$ and $k$ such that $f_k=h_k^{-1}$.
\smallskip
\item[(iv)]
$\displaystyle\biggl\| \sum_{n = k+1}^{\infty} c_n f_n \biggr\| < \pi c_k$ for $k \geq N+1$ and $k$ such that~$\exp f_k=\nobreak g_k$.
\end{itemize}
\smallskip
Now define, for $k \geq N+1$,
$$b_k = \sum_{n = k}^{\infty} c_n f_n.$$
Condition (ii) assures that each $b_k$ is a well-defined function in $A$.
Let $B = [f_1, \ldots, f_N, b_{N+1}]$.
Clearly $B \subseteq A$.
We will prove that 
$A = B$, by using induction to show that $f_n$ is in $B$ for each $n = 1, 2, 3,  \ldots$.
By definition $\row fN$ are in $B$.
Now given $k\geq N+1$, we assume that $\row f{k-1}$ are in $B$, and show that then $f_k$ is in $B$.
First note that $b_k = b_{N+1} - (c_{N+1} f_{N+1} + \ldots + c_{k-1} f_{k-1})$ is in $B$.
Also $h_k$ is in $B$ since $h_k$ is in $[\row f{k-1}]$.\\

\noindent
Case I: $f_k=h_k^{-1}$.\\
Note that $b_k h_k$ is in $B$, since $h_k$ is in $[\row f{k-1}]$.  Furthermore,
\begin{align*}
\|c_k - b_k h_k\|
&= \biggl \|c_k - \sum_{n = k}^{\infty} c_n f_n h_k\biggr \| \\\noalign{\vskip 4pt}
&=\biggl \|\sum_{n = k+1}^{\infty} c_n f_n h_k\biggr\|
\leq \sum_{n = k+1}^{\infty} c_n \| f_n h_k\| \le 2^{-k} c_k < c_k.
\end{align*}
Therefore, by Proposition~\ref{elem}, $b_k h_k = c_k - (c_k -b_k h_k)$ is invertible in $B$.
Hence $h_k$ is invertible in $B$, that is, $f_k = h^{-1}_k$ is in $B$.\\

\noindent
Case II: $\exp f_k = g_k$.\\
Rearranging the equation $b_k = \sum_{n = k}^{\infty} c_n f_n$ yields
$$ - \sum_{n=k+1}^\infty (c_n/c_k) f_n = f_k - (1/c_k)b_k$$
so
\begin{align*}
\exp\biggl(-\!\sum_{n=k+1}^\infty (c_n/c_k) f_n\biggl) 
&= \exp f_k  \exp\bigl((-1/c_k)b_k\bigr)\\
&=g_k \exp\bigl((-1/c_k)b_k\bigr).
\end{align*}
Therefore, $\exp\Bigl(-\sum_{n=k+1}^\infty (c_n/c_k) f_n\Bigl)$ is in $B$.  By condition (iv),
$$\biggl\| \sum_{n=k+1}^\infty (c_n/c_k) f_n \biggr\| < \pi$$
so the range of $\exp\Bigl(-\sum_{n=k+1}^\infty (c_n/c_k) f_n\Bigl)$ is contained in $\C\sm (-\infty, 0]$.  Therefore, the principle branch of the logarithm function is the uniform limit of polynomials on the range of $\exp\Bigl(-\sum_{n=k+1}^\infty (c_n/c_k) f_n\Bigl)$, and consequently the function $-\sum_{n=k+1}^\infty (c_n/c_k) f_n$ is in $B$.
Since 
$$f_k=(1/c_k)b_k - \sum_{n=k+1}^\infty (c_n/c_k) f_n$$
we conclude that $f_k$ is in $B$.
Thus, by induction, $f_n$ is in $B$ for each positive integer $n$, and hence, $A = B = [\row f{{n-1}}, b_{N+1}]$.
\end{proof}

The straightforward proof of the next lemma is left to the reader.

\blem\label{straightforward}
Let $A$ be a uniform algebra on a compact space $X$.  Let $K$ be a compact subset of $X$, and let $x$ be a point of $K$.  If $A$ is strongly regular at $x$, then $\ol {A|K}$ is strongly regular at $x$ as well.
\elem

%
%
%
%

\section{Proof of Theorem~\ref{logmodular2}}\label{logmodular2-section}

\noindent{\emph{Step 1: We describe a setup that will yield the desired uniform algebra.}}

Give the set $\{(m,n)\in \Z_+^2:1\leq n\leq m\} \cup \{(0,0)\}$ the dictionary order, and let 
$\sigma:\{(m,n)\in \Z_+^2:1\leq n\leq m\} \cup \{(0,0)\} \ra \Z_+\cup \{0\}$ be the order-preserving bijection between the given sets.  

We will show that the following setup can be achieved.
\itemskip
\begin{enumerate}
\item[(1)] There is sequence $\sequence fn$ with each $f_n$ a polynomial on $\C^n$.
\itemskip
\item[(2)] There is a sequence $\sequence \alpha n$ in $\C$ with $|\alpha_n|< 1/n$ for all $n\in\Z_+$.
\itemskip
\item[(3)] Set $\Sigma_0=\C^1$.  For each $N\in\Z_+$, set
\begin{align*}
\Sigma_N=\{\vector z{N+1} \in \C^{N+1}&: z_{n+1}^2=f_n\vector zn\cr
&{\rm for\ every\ } n=1,\ldots, N\}.
\end{align*}
Each $\Sigma_N$
is a possibly disconnected open Riemann surface contained in $\C^{N+1}$.
\itemskip
\item[(4)] Set $X_0=\ol \D$.  For each $N\in\Z_+$, set 
$$X_N=\{\vector z{N+1}\in\Sigma_N:z_1\in \ol \D\}.$$
\itemskip 
\item[(5)] For each $N\in\Z_+\cup\{0\}$, set
$$A_N=P(X_N).$$
There is a countable set of polynomials $\doubleindex gNj$ that is dense in $A_N$.
\itemskip
\item[(6)] Given $N\in\Z_+$, let $(r,s)=\sigma^{-1}(N-1)$.\hfil\break
\leftline{If $r=s$, then $f_N\vector zN = g_{0, r+1}(z_1)-\alpha_N$.}
\leftline{If $r>s$, then $f_N\vector zN = g_{\sigma(s,s), r-s}\vector z{\sigma(s,s) + 1}-\alpha_N$.}
\end{enumerate}

Note that each $\Sigma_N$ projects onto $\Sigma_{N-1}$ under the map $\vector z{N-1}$, and hence onto $\Sigma_0=\C$ under $z_1$.

Set
$$\Sigmao = \{ \sequence zn \in\Co : z_{n+1}^2 = f_n\vector zn 
{\rm\ for\ every\ }n\in\Z_+\}$$
and
$$\Xo = \{\sequence zn \in \Sigmao : z_1\in \ol \D \}.$$
Let $\Ao = P(\Xo)$.
After showing that the above setup can be achieved, we will show that $\Ao$ satisfies the requirements of the theorem.

\medskip
\noindent{\emph{Step 2: We prove that the setup above can be achieved.}

We apply induction.  
The objects $\Sigma_0$, $X_0$, and $A_0$ are explicitly specified in conditions~(3)--(5).
Choose a countable set of polynomials 
$\doubleindex g0j$
that is dense in $A_0$.
Then noting that there is no $f_0$ and no $\alpha_0$ to be specified, we see that everything is as required for $N=0$.

Now let $M\in\Z_+$ be arbitrary and assume that the objects $f_n$ and $\alpha_n$, for all $1\leq n<M$, and $\Sigma_n$, $X_n$, $A_n$, and $\{g_{n,j}\}_{j=1}^\infty$, for all $0\leq n<M$, have been obtained consistent with the requirements in conditions (1)--(6).  Let $(r,s)=\sigma^{-1}(M-1)$.  If $r=s$, define the polynomial $q_M$ on $\C^M$ by $q_M\vector zM = g_{0, r+1}(z_1)$.  If $r>s$, define the polynomial $q_M$ on $\C^M$ by $q_M\vector zM=g_{\sigma(s,s), r-s}\vector z{\sigma(s,s)+1}$.  By Sard's theorem, almost every point of $\C$ is a regular value of the map $z_{M+1}^2 - q_M\circ\vector zM : \Sigma_{M-1} \times \C \ra \C$.  Thus for some $\alpha_M\in\C$ with $|\alpha_M|<1/M$, setting $f_M=q_M - \alpha_M$ yields that $z_{M+1}^2 - f_M \circ \vector zM : \Sigma_{M-1} \times \C \ra \C$ has $0$ as a regular value.  Since the set $\Sigma_M$ defined as in (3) satisfies 
$$\Sigma_M = \{\vector z{M+1}\in \Sigma_{M-1} \times \C : z_{M+1}^2 = f_M\vector zM \},$$
the set $\Sigma_M$ is then a possibly disconnected open Riemann surface.
Thus the induction can proceed (since we can choose a countable set of polynomials 
$\doubleindex gMj$
that is dense in $A_M=P(X_M)$).

\medskip
\noindent{\emph{Step 3: We prove that $\Ao$ has a dense set of elements 
having square roots.}}

For each $1\leq n\leq N\leq\omega$, we will identify each function on $\C^n$ with the corresponding function on $\C^N$ that is independent of the extra variables.  Then each $A_N$ is a subset of $\Ao$, and each polynomial in $\Ao$ is a polynomial in $A_N$ for some finite $N$.  Thus the increasing union $\bigcup_{N=0}^\infty A_N$ is dense in $\Ao$, and hence so is the increasing union $\bigcup_{s=0}^\infty A_{\sigma(s,s)}$.  Since $\{f_n+\alpha_n\}_{n=1}^\infty = \bigcup_{s=0}^\infty \{g_{\sigma(s,s),j} \}_{j=1}^\infty$, and $\alpha_n\ra 0$ as $n\ra \infty$, it follows that $\{f_n\}_{n=1}^\infty$ is dense in $\Ao$.  Since $f_n=z_{n+1}^2$ (when regarded as elements of $\Ao$), this shows that $\Ao$ has a dense set of elements 
having square roots.

\medskip
\noindent{\emph{Step 4: We prove that $\Ao$ has proper Shilov boundary.}}

For each finite $N$, set 
$$\Gamma_N = \{ \vector z{N+1} \in \Sigma_N : z_1\in \pD \},$$
and set 
$$\Gamma_\omega = \{\sequence zn \in \Sigma_\omega : z_1\in \pD\}.$$
Note that for each $N$ finite, $X_N$ is a compact subset of $\Sigma_N$ with boundary $\Gamma_N$.  Consequently, every polynomial in $A_N$ takes its maximum modulus on $\Gamma_N$.  Therefore, every polynomial in $\Ao$ takes its maximum modulus on $\Gamma_\omega$.  It follows that $\Gamma_\omega$ is a boundary for $\Ao$, so $\Ao$ has proper Shilov boundary.

\medskip
\noindent{\emph{Step 5: We prove that for  each $N\in \Z_+\cup\{0\}$, each point of $\Gamma_N$ has a unique representing measure with respect to $A_N$.}}

For $N=0$, this is well known (and easily verified).  Assume, for the purpose of induction, that the assertion holds for a particular value of $N$.  Let $x\in \Gamma_{N+1}$ be arbitrary, and let $\mu$ be a representing measure for $x$ with respect to $A_{N+1}$.  Let $\pi: X_{N+1}\ra X_N$ be projection on the first $N$ coordinates.  Then the push forward measure $\pi_*(\mu)$ (defined by $\pi_*(\mu)(E) = \mu(\pi^{-1}(E))$ for every Borel set $E$ in $X_N$) represents $\pi(x)$ with respect to $A_N$.  Since $\pi(x)$ is in $\Gamma_N$, our induction hypothesis implies that $\pi_*(\mu)$ is the unit point mass at $\pi(x)$.  Consequently, $\mu$ is supported on the fiber over $\pi(x)$.  But each fiber of $\pi$ consists of at most two points.  Consequently, $\mu$ must be supported on the single point $x$, so $\mu$ is the unit point mass at $x$.

\medskip
\noindent{\emph{Step 6: We prove that for each $N\in \Z_+\cup\{0\}$, the equality $P(X_N)=\sO(X_N)$ holds.}}

Let $f$ be an arbitrary function holomorphic on a neighborhood $V$ of $X_N$ in $\Sigma_N$.  There exists in $\C$ an open disc $\Omega$  that contains $\ol \D$ such that
$$X_N\subset (\Omega\times \C^N) \cap \Sigma_N \subset V.$$
By \cite[Theorem~I5(ii)]{Gunning}, $f$ extends to a holomorphic function $\tilde f$ on $\Omega\times \C^N$.  Since $\tilde f$ can be approximated by polynomials uniformly on compact subsets of $\Omega\times \C^N$, the desired equality follows.

\medskip
\noindent{\emph{Step 7: We prove that $\Ao$ is logmodular on $\Gamma_\omega$ thereby completing the proof.}}

Together, Steps~5 and~6 and Lemma~\ref{density} yield that, for each finite $N$, the real-linear span of $\log|A_N^{-1}|$, restricted to $\Gamma_N$, is dense in $\crx{\Gamma_N}$.  Regarding the functions in $\crx{\Gamma_N}$ as functions on $\Gamma_\omega$ independent of the extra variables, the union $\bigcup_{N=0}^\infty \crx{\Gamma_N}$ is dense in $C(\Gamma_\omega)$ (by the Stone-Weierstrass theorem for instance).  It follows that the real-linear span of $\log|A_\omega^{-1}|$, restricted to $\Gamma_\omega$, is dense in $\crx{\Gamma_\omega}$.  Therefore, $\log|A_\omega^{-1}|$, restricted to $\Gamma_\omega$, itself is dense in $\crx{\Gamma_\omega}$ by Lemma~\ref{trick}, and the proof is complete.

%
%
%
%

\section{Proof of Theorem~\ref{Dirichlet-PX}}\label{Dirichlet-PX-section}

\noindent{\emph{Step 1: We describe a setup that will yield the desired uniform algebra.}}

As in the proof of Theorem~\ref{logmodular2}, we
give the set $\{(m,n)\in \Z_+^2:1\leq n\leq m\} \cup \{(0,0)\}$ the dictionary order, and we let 
$\sigma$ be the order-preserving bijection from this set onto  $\Z_+\cup \{0\}$.  Let $\sequencecurly Dk$ and $K$ be as in Lemma~\ref{specialK} with $0<r<1$.  Let $\area$ denote the $2$-dimensional Lebesgue measure of $K$, and note that $\area>0$.   

We will show that the following setup can be achieved.
\itemskip
\begin{enumerate}
\item[(1)] There is sequence $\sequence fn$ with each $f_n$ a rational function with domain $U_n \subset \C^n$.
\itemskip
\item[(2)] There is a sequence of real numbers $\sequence cn$ and a sequence of positive integers $\sequence mn$.
\itemskip
\item[(3)] Set $\Sigma_0=\C^1$.  For each $N\in\Z_+$, set
\begin{align*}
\phantom{MMm}\Sigma_N=\{&\vector z{N+1} \in \C^{N+1}: \vector zn\in U_n {\rm\ and}\cr
&\exp z_{n+1}=f_n\vector zn
{\rm\ for\ every\ } n=1,\ldots, N\}\end{align*}
Each $\Sigma_N$ is a possibly disconnected open Riemann surface contained in $\C^{N+1}$.
\itemskip
\item[(4)] Set $X_0=K$.  For each $N\in\Z_+$, set 
\begin{align*}
\phantom{MMM}X_N=\{\vector z{N+1}\in\Sigma_N &:
\vector zN\in X_{N-1}\ {\rm\ and}\cr
&\phantom{M}\Im z_{N+1}\in [c_N, c_N + 2\pi m_N]\}.
\end{align*}
The inclusion $U_N\supset X_{N-1}$ holds, and $f_N$ is zero-free on $X_{N-1}$.
\itemskip
\item[(5)]
For each $N\in\Z_+\cup \{0\}$, set 
$$A_N=R(X_N).$$
There is a countable set of rational functions $\doubleindex gNj$ that is dense in $A_N$ and such that each function $g_{N,j}$ is zero-free on $X_N$.
\itemskip
\item[(6)] Given $N\in\Z_+$, let $(r,s)=\sigma^{-1}(N-1)$.\hfil\break
\leftline{If $r=s$, then $f_N\vector zN = g_{0, r+1}(z_1)$.}
\leftline{If $r>s$, then $f_N\vector zN = g_{\sigma(s,s), r-s}\vector z{\sigma(s,s) + 1}$.}
\itemskip
\item[(7)] For each $N\in \Z_+ \cup \{0\}$, there are a positive integer $k(N)$
and a decreasing sequence of compact sets $\{X_N^k\}_{k=k(N)}^\infty$ in $\Sigma_N$ such that $\bigcap_{k=k(N)}^\infty X_N^k = X_N$ and each $X_N^k$ has piecewise smooth boundary in $\Sigma_N$.
\itemskip
\item[(8)] For each $N\in \Z_+\cup\{0\}$ and $k\geq k(N)$, let $\mu_N^k$ be the complex regular Borel measure on $\partial X_N^k$ induced by the form $dz_1$, i.e., such that $\int f\, d\mu_N^k = \int_{\partial X_N^k} f\, dz_1$ for every $f\in C(\partial X_N^k)$.
Then there is a $\delta_N>0$ independent of $k$ such that
$$\|\mu_N^k\|<4\pi\, m_1\cdots m_N - \delta_N.$$
\itemskip
\item[(9)] For each $N\in \Z_+\cup\{0\}$ and $k\geq k(N)$, 
$$\left| \int \ol z_1 \, d\mu_N^k\right| > 2\area\,  m_1\cdots m_N.$$
\end{enumerate}
(When $N=0$ the right hand sides of the inequalities in conditions~(8) and~(9) are to be interpreted as $4\pi - \delta_0$ and $ 2\area$, respectively.)

Note that for every $0\leq N\leq M\leq \omega$, the canonical projection $\C^M\ra \C^N$ maps $X_M$ onto $X_N$.

Set
\begin{align*}
\Sigmao = \{&\sequence zn \in\Co :\vector zN\in U_N {\rm\ and}\cr
&\exp z_{N+1}=f_N\vector zN
{\rm\ for\ every\ } N\in\Z_+\},
\end{align*}
set
$$\Xo = \{\sequence zn \in \Sigmao : \vector zN\in X_N {\rm\ for\ every\ } N\in\Z_+\},$$
and set
$$\Ao=R(\Xo).$$

After showing that the above setup can be achieved, we will show that then 
$\Ao$ satisfies all the conditions required to yield the theorem.  The proof will show that conditions~(1) through (6) imply that $\Ao$ satisfies all the required conditions except for the nontriviality of $\Ao$.  Conditions (7) through (9) will be used to show that $\Ao$ is nontrivial.

\medskip
\noindent{\emph{Step 2: We prove that the setup above can be achieved.}

We apply induction.
The objects $\Sigma_0$, $X_0$, and $A_0$ are explicitly specified in conditions~(3)--(5).
Because $X_0$ has empty interior in $\C$, there exists a countable set of rational functions $\doubleindex g0j$
that is dense in $A_0$ and such that each function $g_{0,j}$ is zero-free on $X_0$.
For each $k\geq 1$, set $X_0^k= \ol \D\sm \bigcup_{j=1}^k D_j$, and let $\mu_0^k$ be the complex regular Borel measure on $\partial X_0^k$ induced by the form $dz_1$.  The length of $\partial X_0^k$ is less than $2(1+r)\pi$, so $\|\mu_0^k\|< 2(1+r)\pi < 4\pi - \delta_0 < 4\pi$ for some $\delta_0>0$.  An application of Green's theorem (Stokes' theorem) carried out in \cite[p.~270]{S1} shows that 
$$\left | \int \ol z_1 \, \mu_0^k \right | =  2\, {\rm Area}(X_0^k)  > 2\area.$$
Thus noting that there are no $f_0$, $U_0$, $c_0$, and $m_0$ to be specified, we see that everything is as required for $N=0$.

Now let $M\in \Z_+$ be arbitrary, and assume that the objects $f_n$, $U_n$, $c_n$, and $m_n$, for all $1\leq n<M$, and 
$\Sigma_n$, $X_n$, $A_n$, $\{g_{n,j}\}_{j=1}^\infty$, $\{X_n^k\}_{k=k(n)}^\infty$, $\mu_n^k$, and $\delta_n$, for all $0\leq n<M$, have been obtained consistent with the requirements in conditions~(1)--(9).  
Define the rational function $f_M$ in accordance with condition~(6).
Note that the domain $U_M$ of $f_M$ contains $X_{M-1}$, and that $f_M$ is zero-free on $X_{M-1}$.  
Note also that zero (and in fact every complex number) is a regular value of the function 
 $\exp z_{M+1} - f_M \circ \vector zM : (\Sigma_{M-1} \cap U_M) \times \C \ra \C$.
The set $\Sigma_M$ defined in accordance with condition~(3) satisfies
$$\Sigma_M=\{\vector z{M+1} \in (\Sigma_{M-1}\cap U_M) \times \C:
\exp z_{M+1}=f_M\vector zM\}$$
and hence is a possibly disconnected open Riemann surface.

We now turn to obtaining suitable values for $c_M$ and $m_M$.  Choose $k(M)$ large enough that $X_{M-1}^k \subset U_M$ for all $k\geq k(M)$.  Then for $k\geq k(M)$, set
$$\tX_M^k = \{\vector z{M+1} \in \Sigma_M : \vector zM \in X_{M-1}^k\}.$$

Since $\Sigma_M$ is locally a graph over $\Sigma_{M-1}\cap U_M$, and $X_{M-1}^k\subset U_M$ has piecewise smooth boundary in $\Sigma_{M-1}$, the set $\tX_M^k$ has piecewise smooth boundary in $\Sigma_M$.  Applying the transversality theorem (see for instance \cite[pp.~68--69]{GP}) yields that for almost every real number $c$, the manifold $\{\Im z_{M+1} =c\}$ in $\C^{M+1}$ is transverse to $\Sigma_M$ and, for each $k\geq k(M)$,  avoids each of the points where $\partial X_{M-1}^k$ is not smooth and is transverse to each of the smooth $1$-manifolds that make up $\partial X_{M-1}^k$.  Choose $c_M$ to be such a real number $c$.  Then for every positive integer $m$, the set 
\begin{align*}
X_M^k(m)=\{\vector z{M+1}&\in\Sigma_M :  \vector zM\in X_{M-1}^k\ {\rm\ and}\cr
&\Im z_{M+1}\in [c_M, c_M + 2\pi m]\}
\end{align*}
has piecewise smooth boundary in $\Sigma_M$.

Note that $\partial X_M^k(m)= \eik \cup \eck$, where
\begin{align*}
\eik=\{\vector z{M+1}&\in\Sigma_M :  \vector zM\in\partial X_{M-1}^k\ {\rm\ and}\cr
&\Im z_{M+1}\in [c_M, c_M + 2\pi m]\}
\end{align*}
and 
\begin{align*}
\eck=\{\vector z{M+1}&\in\Sigma_M :  \vector zM\in X_{M-1}^k\ {\rm\ and}\cr
&\Im z_{M+1}\in \{c_M, c_M + 2\pi m\}\}.
\end{align*}
For each $s\in\Z_+$, the canonical projection $\C^{M+1}\ra \C^M$ takes 
$$
\bigl\{\vector z{M+1}\in\Sigma_M : 
\Im z_{M+1}\in \bigl(c_M + 2\pi s, c_M + 2\pi (s+1)\bigr)\bigr\}
$$
bijectively onto
$$\{\vector zM\in\Sigma_{M-1}\cap U_M : 
\arg f_M\vector zM \neq c_M\}.
$$
Consequently, setting 
$$\eik(s) = \eik \cap \bigl\{\Im z_{M+1} \in \bigl(c_M + 2\pi s, c_M + 2\pi(s+1)\bigr)\bigr\},$$ 
our choice of $c_M$ insures that for each $s=0,\ldots m-1$ and for each continuous function $f$ on $\Sigma_{M-1}$,
$$\int_{\eik(s)} f\, dz_1 =
\int_{\partial X_{M-1}^k}\! f \, dz_1,$$
and hence,
$$\int_{\eik} f \, dz_1 = m\, \int_{\partial X_{M-1}^k} f \, dz_1.$$
In particular,
$$(**)\qquad\ \  \left | \int_\eik \ol z_1 \, dz_1 \right | = m\left | \int_{\partial X_{M-1}^k} \ol z_1 \, dz_1 \right | > 2\area\,  m_1\cdots m_{M-1} m,\phantom{(****)}$$
and the norm of the measure $\mu_I^k$ on $\eik$ induced by $dz_1$ satisfies 
$$\| \mu_I^k \| = m \| \mu_{M-1}^k \| < 4\pi\, m_1\cdots m_{M-1} m - m\delta_{M-1}.$$

By our choice of $c_M$, the subset of $\Sigma_M$ where $\Im z_{M+1}$ is either $c_M$ or $c_M+2\pi m$ is a smooth $1$-manifold, and $\eck$ is a finite union of closed arcs on that $1$-manifold.  Thus the norm of the measure $\mu_J^k$ on $\eck$ induced by $dz_1$ is finite.  Furthermore, $\eck$ decreases with $k$, and hence so does $\|\mu_J^k\|$.  Thus there is a constant $C$ such that $\| \mu_J^k\|\leq C$ for all $k\geq k(M)$.  Note that $C$ is independent of $m$.  
Now the measure $\mu_M^k(m)$ on $\partial X_M^k(m)$ induced by $dz_1$ satisfies
$$\|\mu_M^k(m)\| \leq \|\mu_I^k\| + \| \mu_J^k\| \leq 4\pi\, m_1\cdots m_{M-1} m - m\delta_{M-1} +C.$$
Therefore, choosing the positive integer $m_M$ sufficiently large and setting $X_M^k=X_M^k(m_M)$ yields
that there is a $\delta_M>0$ independent of $k$ such that
$$\|\mu_M^k\| < 4\pi\, m_1\cdots m_M - \delta_M.$$

The set $\eck$ is the union of two disjoint sets $E(1)$ and $E(2)$ where $\Im z_{M+1}$ takes the values $c_M$ and $c_M+2\pi m_M$, respectively.  The canonical projection $\C^{M+1}\ra \C^M$ takes each of the sets $E(1)$ and $E(2)$ 
bijectively onto
$$\{\vector zM\in   X_{M-1}^k : 
\arg f_M\vector zM = c_M\},
$$
but with opposite orientations (when regarded as subsets of $\partial X_M^k$).  Consequently,  
$$\int_\eck \ol z_1 \, dz_1 = 0,$$
so $(**)$ gives
$$\left | \int_{\partial X_M^k} \ol z_1 \, dz_1  \right | = \left | \int_\eik \ol z_1 \, dz_1 \right |
> 2\area\,  m_1\cdots m_{M-1} m_M.$$

With $X_M$ defined in accordance with condition~(4), the equation $\bigcap_{k=k(M)}^\infty X_M^k = X_M$ holds.

All that remains to complete the inductive step is to verify that a set $\{g_{M,j}\}_{j=1}^\infty$
as in condition~(5) can be chosen.  For that it is enough to show that the uniform algebra $A_M=R(X_M)$ has dense invertibles.  Since the image of $X_M$ under $z_1$ has empty interior in $\C$, any connected open set of $\Sigma_M$ contained in $X_M$ must lie in a single fiber of $z_1$ restricted to $\Sigma_M$.  Since each such fiber is at most countable, this implies that $X_M$ has empty interior in $\Sigma_M$.  Consequently, the image of $X_M$ under any smooth function on $\Sigma_M$, and in particular any rational function holomorphic on $X_M$, has empty interior in $\C$ (by \cite[Lemma~4.3]{Izzo2018}, for instance).  That $A_M$ has dense invertibles follows.

\medskip
\noindent{\emph{Step 3: We prove that $\Ao$ is nontrivial.}

First note that (by Stokes' theorem) for any finite $N$ and $k\geq k(N)$, the measure $\mu_N^k$ annihilates every function in $\sO(X_N^k)$ and hence annihilates every function that is holomorphic in a neighborhood of $X_N^k$ in $\C^N$.  Now consider an arbitrary rational function $g$ holomorphic on a neighborhood of $\Xo$ in $\Co$.  For some finite $N$, the function $g$ can be regarded as a rational function holomorphic on a neighborhood of $X_N$ in $\C^N$.  Then (by condition~(7)) for all $k$ sufficiently large, $g$ is holomorphic on a neighborhood of $X_N^k$ in $\C^N$.  By conditions~(8) and~(9), for these $k$,
$$\|\ol z_1 - g \|_{X_N^k} \geq \frac{1}{\|\mu_N^k \|}\, {\left | \int \ol z_1 \, d\mu_N^k \right |} > \frac{\area}{2\pi}.$$
Since $\bigcap_{k=k(N)}^\infty X_N^k = X_N$, this implies
$$\|\ol z_1 - g \|_{X_N}  \geq \frac{\area}{2\pi}.$$
Therefore, $\| \ol z_1 - f \|_{\Xo} \geq \area/2\pi>0$ for every rational function $f$ holomorphic on a neighborhood of $\Xo$ in $\Co$.  Thus $\ol z_1$ is not in $\Ao$, so $\Ao$ is nontrivial.

\medskip
\noindent{\emph{Step 4: We prove that $\Ao$ has dense exponentials.}}

For each $0\leq n\leq N\leq\omega$,  we can regard $A_n$ as a subset of $A_N$ by identifying each function in $A_n$ with the corresponding function in $A_N$ that is independent of the extra variables.  Then the increasing union $\bigcup_{N=0}^\infty A_N$ is dense in $\Ao$, and hence so is the increasing union $\bigcup_{s=0}^\infty A_{\sigma(s,s)}$.  Since $\{f_n\}_{n=1}^\infty = \bigcup_{s=0}^\infty \{g_{\sigma(s,s),j} \}_{j=1}^\infty$, it follows that $\{f_n\}_{n=1}^\infty$ is dense in $\Ao$.  Since $f_n=\exp z_{n+1}$ (when regarded as elements of $\Ao$), this shows that $\Ao$ has dense exponentials.

\medskip
\noindent{\emph{Step 5: We prove that for each $N\in \Z_+\cup\{0\}$, the equality $R(X_N)=\sO(X_N)$ holds.}}

The inclusion $R(X_N)\subset \sO(X_N)$ is obvious.  To prove the reverse inclusion
let $f$ be an arbitrary function holomorphic on a neighborhood $V$ of $X_N$ in $\Sigma_N$.  The set $X_N$ is rationally convex.  Therefore, there is a domain of holomorphy $\Omega$ in $\C^{N+1}$ such that
$$X_N\subset \Omega \cap \Sigma_N \subset V.$$
By \cite[Theorem~I5(ii)]{Gunning}, $f$ extends to a holomorphic function $\tilde f$ on $\Omega$.
By the rationally convex analogue of the Oka-Weil theorem \cite[Theorem~29.9]{S1}, 
$\tilde f$ can be approximated uniformly on $X_N$ by rational functions.

\medskip
\noindent{\emph{Step 6: We prove that, for each finite $N$, the only Arens-Singer measures for $A_N$ are the unit point masses}}

For $N=0$, the assertion is immediate from Lemma~\ref{uniqueness}.
Assume, for the purpose of induction, that the assertion holds for a particular value of $N$.  Let $x\in X_{N+1}$ be arbitrary, and let $\mu$ be an Arens-Singer measure for $x$ with respect to $A_{N+1}$.  Let $\pi: X_{N+1}\ra X_N$ be projection on the first $N$ coordinates.  Then the push forward measure $\pi_*(\mu)$ is an Arens-Singer measure for $\pi(x)$ with respect to $A_N$.  Therefore, our induction hypothesis implies that $\pi_*(x)$ is the unit point mass at $\pi(x)$.  Consequently, $\mu$ is supported on the fiber over $\pi(x)$.  But each fiber of $\pi$ is finite.  Consequently, $\mu$ must be supported on the single point $x$, so $\mu$ is the unit point mass at $x$.

\medskip
\noindent{\emph{Step 7: We prove that $\Ao$ is Dirichlet on $\Xo$.}}

Note that $\exp \Ao= \Ao^{-1}$, because $\Ao$ has dense exponentials and $\exp A$ is closed in $A^{-1}$ for any commutative Banach algebra $A$.  Consequently, the equality 
$\log|\Ao^{-1}| = \Re(\Ao)$ holds, since for any $f\in \Ao$ we have $\log|\exp(f)|=\Re(f)$.  Therefore, it suffices to show that $\log|\Ao^{-1}|$ is dense in $\crx {\Xo}$.

Recall from the last paragraph of Step~2 that each $X_N$ has empty interior in $\Sigma_N$.  Therefore, by the results of the two preceding steps
and Lemma~\ref{density}, the real-linear span of $\log| A_N^{-1}|$ is dense in $\crx {X_N}$ for each finite $N$.  Regarding all functions as defined on $\Xo$, the union $\bigcup_{N=1}^\infty \crx{X_N}$ is dense in $\crx{\Xo}$.  Consequently, the real-linear span of $\log|\Ao^{-1}|$ is dense in $\crx{\Xo}$.  But $\log|\Ao^{-1}|$ is itself a real-linear space, since $\log|\Ao^{-1}| = \Re(\Ao)$.

\medskip
\noindent{\emph{Step 8: We prove that the maximal ideal space $\mao$ of $\Ao$ is $\Xo$.}

Let $\varphi\in\mao$ be arbitrary.  For each finite $N$, the set $X_N$ is rationally convex, so there exists a unique point $\bigl(\zeta_1^{(N)},\ldots, \zeta_{N+1}^{(N)}\bigr)$ in $X_N$ such that $\varphi(f)= f\bigl(\zeta_1^{(N)},\ldots, \zeta_{N+1}^{(N)}\bigr)$ for every $f\in A_N$.  Given $M>N$, necessarily $(\zeta_1^{(M)},\ldots, \zeta_{N+1}^{(M)}) = (\zeta_1^{(N)},\ldots, \zeta_{N+1}^{(N)})$.  Consequently, setting $\zeta_n=\zeta_n^{(n-1)}$ for every $n\in \Z_+$, the point $\sequence \zeta n$ is in $\Xo$ and satisfies $\varphi(f) = f\bigl(\sequence \zeta n\bigr)$ for every $f\in \Ao$.

\medskip
\noindent{\emph{Step: 9: We prove that $\dim \Xo = 1$.}}

Since $X_0=K$ has empty interior in $\C$, we have that $\dim X_0\leq 1$ (by~\cite[Theorem~IV~3]{HW}).  Let $\pi:\Xo\ra X_0$ be projection onto the first coordinate.  It is easily seen that the inverse image of each point of $X_0$ under $\pi$ is a totally disconnected compact space and hence of topological dimension zero.  Therefore, $\dim \Xo\leq \dim X_0 \leq1$, by \cite[Theorem~VI~7]{HW}.  Since $\dim \Xo$ can not be $0$ (because $\Ao$ is nontrivial), $\dim \Xo$ must be $1$.

\medskip
\noindent{\emph{Step 10: We prove that $A_N$ is regular on $X_N$ for every $1\leq N\leq \omega$.}}

The proof is similar to Feinstein's proof that the Cole construction preserves regularity \cite[Theorems~2.4 and~2.8]{F1}.  

We first apply induction to show that $A_N$ is regular on $X_N$ for every finite $N$.  By hypothesis, $A_0$ is regular.  Now assuming that $A_{N-1}$ is regular, we show that $A_N$ is regular.  Fix $x=\vector x{N+1}$ in $X_N$.  It suffices to show that given $y=\vector y {N+1}$ in $X_N\sm \{\vector x{N+1}\}$, there exists a function $h$ in $J_y(A_N)$ with $h(x)=1$.  

First suppose that $\vector y N \neq \vector xN$.  Then since $A_{N-1}$ is regular there exists $h\in J_{\vector y N} (A_{N-1})$ with $h(\vector xN)=1$.  Regarding $h$ as a function on $X_N$ independent of the last coordinate yields the desired function.

Now suppose that $\vector y N = \vector xN$.  Let $W_1$ be the open disc of radius $1$ in $\C$ centered at $x_{N+1}$, and let $W_2$ be the union of the open discs of radius $1$ centered at each of the points $x_{N+1} + 2\pi ki$, $k\in \Z\sm\{0\}$ such that $(\row xN, x_{N+1} + 2\pi ki)$ is in $X_N$.  
Denote the function $X_N\ra \C$ sending each point of $X_N$ to its $(N+1)$-th coordinate by $\tau$.
Let 
$$W=\tau^{-1}(W_1 \cup W_2)$$
and 
$$C_1=\tau^{-1}(\ol W_1). \phantom{\cup W_2}$$
Then $X_N\sm W$ is a compact set disjoint from the fiber $\pi_N^{-1}(\vector xN)$, where $\pi_N: X_{N+1}\ra X_N$ is the canonical projection.  Therefore,
it follows from the previous paragraph that there exists a function $g$ in $A_N$ such that $g(x)=1$ and $g$ is identically zero on $X_N\sm W$.  Also, by Runge's theorem, there exists a sequence of polynomials $(q_k)$ satisfying
\begin{itemize}
\item[(a)] $q_k\ra 1$ uniformly on $\ol W_1$.
\item[(b)] $q_k\ra 0$ uniformly on $\ol W_2$.
\end{itemize}
Set $h_k=(q_k\circ \tau) g$ for each $k\in \Z_+$.  Then $h_k$ is in $A_N$, and $h_k\ra h$ uniformly on $X_N$, where
$$
h(z) = 
\begin{cases} 0 &\mbox{for\ } z\in X_N\sm C_1 \\
g(z) & \mbox{for\ } z\in C_1.
\end{cases}
$$
The function $h$ is in $A_N$, is zero on the neighborhood $X_N\sm C_1$ of $y$, and satisfies $h(x)=1$. This completes the inductive proof that $A_N$ is regular for all finite $N$.  

To show that $\Ao$ is regular it suffices to show that given distinct points $x=\sequence xn$ and $y=\sequence y n$ in $\Xo$ there exists a function $h\in J_y(\Ao)$ with $h(x)=1$.  Choose $N\in \Z_+\cup\{0\}$ such that $x_{N+1}\neq y_{N+1}$.  By the regularity of $A_N$, there exists $h\in J_{\vector y {N+1}}(A_N)$ with $h\vector x{N+1} = 1$.  When regarded as a function on $\Xo$ independent of the extra variables, $h$ has the required properties to establish the regularity of $\Ao$.

\medskip
\noindent{\emph{Step 11: We prove that $A_N$ is strongly regular for every $1\leq N\leq \omega$.}}

We first show that $A_N$ is strongly regular for each finite $N$.  Fix $N$ finite and $x\in X_N$ arbitrary.  We know from the previous step that $A_N$ is regular on $X_N$.  Since $X_N$ is rationally convex, it follows by a standard result \cite[Theorem~27.2]{S1} that $X_N$ is normal.  Therefore, it suffices, by the localness of strong regularity for normal uniform algebras \cite[Theorem~4.1]{Izzo2}, to show that there is a closed neighborhood $L$ of $x$ such that $\ol {A_N|L}$ is strongly regular at $x$.  Note that $z_1$ is locally a biholomorphic map from $\Sigma_N$ to $\C$.  Choose a closed disc $\ol\Delta$ in $\C$ centered at $z_1(x)$ such that $z_1$ takes a closed neighborhood $L$ of $x$ in $\Sigma_N$ homeomorphically onto $\ol\Delta$, and $z_1$ takes a neighborhood of $L$ biholomorphically onto a neighborhood of $\ol\Delta$.  Fix $f\in M_x(\ol{A_N|(X_N\cap L)})$ and $\vep>0$ arbitrary.  Choose a function $h\in \sO(X_N)$ such that $h(x)=0$ and $\|f - h\|_{X_N\cap L} < \vep/2$.  The function $z_1$ takes $X_N\cap L$ bijectively onto $K\cap\ol\Delta$.  Let $h^*$ be the holomorphic function defined on a neighborhood of $K\cap \ol\Delta$ such that $h^*\circ z_1=h$.  Then $h^*$ is in $R(K\cap\ol\Delta)=\ol{R(K)|(K\cap\ol\Delta)}$, and $h^*\bigl(z_1(x)\bigr)=0$.  By Lemma~\ref{straightforward}, $\ol{R(K)|(K\cap\ol\Delta)}$, is strongly regular, so there is a function $g^*$ in $\ol{R(K)|(K\cap\ol\Delta)}$, such that $\| h^*-g^* \|_{K\cap \ol\Delta} < \vep/2$ and $g^*=0$ on a neighborhood of $z_1(x)$ in $K\cap \ol\Delta$.  Set $g=g^*\circ z_1$.  Then $g$ is in $\ol{A_N|(X_N\cap L)}$, and $g=0$ on a neighborhood of $x$ in $X_N\cap L$.  Furthermore
$$\| h - g \|_{(X_N\cap L)} = \| h^* -g^* \|_{(K\cap \ol\Delta)} < \vep/2,$$
so $\| f - g \|_{(X_N\cap L)}<\vep$.  Thus $\ol{A_N|(X_N\cap L)}$ is strongly regular at $x$.  This completes the proof that $A_N$ is strongly regular.

To prove now that $\Ao$ is strongly regular, fix $x\in \Xo$, fix $f\in M_x(\Ao)$, and fix $\vep>0$ arbitrary.  Since $\bigcup_{N=1}^\infty A_N$ is dense in $\Ao$, there exists, for some finite $N$, a function $h \in M_{\tilde x}(A_N)$ such that $\|f-h\|<\vep/2$, where $\tilde x$ is the image of $x$ under the projection of $\Xo$ onto $X_N$.  Since $A_N$ is strongly regular, there exists $g\in J_{\tilde x}(A_N)$ such that $\| h - g \|<\vep/2$.  Then, regarded as an element of $\Ao$, the function $g$ lies in $J_x(\Ao)$ and $\| f - g \| <\vep$.  Therefore, $\Ao$ is strongly regular.

\medskip
\noindent{\emph{Step: 12: We complete the proof.}}

It suffices to show that $\Ao$ is generated by two functions, for if $a_1$ and $a_2$ are generators for $\Ao$, and we set $X=\bigl\{ \bigl(a_1(x), a_2(x)\bigr) :\nobreak x\in\nobreak\Xo \bigr\}\subset\nobreak\C^2$, then $P(X)$ is isomorphic as a uniform algebra to $\Ao$, the space $X$ is homeomorphic to $\Xo$, and $X$ is polynomially convex.

Note that $\Ao=P(\Xo)$ because the set $\sequencecurly fn$ is dense in $\Ao$ by the argument in Step~4 and each $f_n$ lies in $P(\Xo)$ since $f_n=\exp(z_{n+1})$ on $\Xo$.  For each finite $N$, let $p_N$ and $q_N$ be polynomials on $\C^N$ such that $f_N=p_N/q_N$.  Then the functions $z_1, q_1^{-1}, z_2, q_2^{-1}, z_3, q_3^{-1}, \ldots$ generate $\Ao$.  Since each $p_n$ and $q_n$ is a polynomial in $\row zn$, and each $z_n$ for $n>1$ is a logarithm of $p_{n-1}/q_{n-1}$, Lemma~\ref{generators} shows that $\Ao$ is generated by 2 functions.

%
%
%
%

\section{Proof of Theorem~\ref{three-ball}}\label{three-ball-section}

In this section we prove Theorem~\ref{three-ball} using the general method for constructing essential uniform algebras given in \cite{FI}.  We first recall the relevant parts of the main theorem of \cite{FI}.

\bthm\label{general-method}
Let $A$ be a nontrivial uniform algebra on a compact space $K$.  Let $X$ be a compact metric space every
{non-empty} open subset of which contains a nowhere dense subspace homeomorphic to $K$.  
Then there exists a sequence $\{K_n\}_{n=1}^\infty$ of pairwise disjoint, nowhere dense subspaces of $X$ each homeomorphic to $K$ such that $\bigcup_{n=1}^\infty K_n$ is dense in $X$ and ${\rm diam} (K_n)\rightarrow 0$.  If  homeomorphisms $h_n:K_n\rightarrow K$ are chosen and we set $A_n=\{f\circ h_n:f\in A\}$, then the collection of functions $\tA=\{f\in C(X): f|K_n\in A_n \hbox{\ for\ all\ $n$}\}$ is an essential uniform algebra on $X$.
The equality $\tA|K_n=A_n$ holds for all $n$.

Furthermore, the following relations hold between the properties of $A$ and the properties of $\tA$:
\item(i) $\tA$ is natural if and only if $A$ is natural;
\item(ii) $\tA$ is strongly regular if and only if $A$ is strongly regular.
\ethm

Several additional relations between the properties of $A$ and the properties of $\tA$ are proved in \cite{FI} but need not concern us here.  We will, however, need an addition relation not considered in \cite{FI}.

\bthm\label{essential-Dirichlet}
In the context of Theorem~\ref{general-method}, $\tA$ is Dirichlet on $X$ if and only if $A$ is Dirichlet on $K$.
\ethm

Before proving Theorem~\ref{essential-Dirichlet}, we use it to establish Theorem~\ref{three-ball}.

\bpf[Proof of Theorem~\ref{three-ball}]
Let $K$ be the set $X$ of Theorem~\ref{Dirichlet-PX}, let $A=P(K)$, and let $X=M$.  Since $K$ is a compact metrizable space of topological dimension~$1$, the hypotheses of Theorem~\ref{general-method} are satisfied (by \cite[Theorems IV 3 and V 2]{HW}).  Thus, by Theorems~\ref{general-method} and~\ref{essential-Dirichlet}, the uniform algebra $\tA$ furnished by Theorem~\ref{general-method} has all the properties listed in the statement of the theorem being proven.
\epf

The proof of Theorem~\ref{essential-Dirichlet} uses a lemma.

\blem\label{S-W}
In the context of Theorem~\ref{general-method}, the set 
$$G=\{ f\in \crx X : f|K_n {\rm \ is\  constant\ for\ all\ but\ finitely\ many\ } n\}$$
is dense in $\crx X$.
\elem

\bpf
Obviously $G$ is a subalgebra of $\crx X$ that contains the constant functions.  Therefore, by the Stone-Weierstrass theorem it suffices to show that $G$ separates points.  To that end, let $a$ and $b$ be two distinct points of $X$.

Let $Y$ be the quotient space obtained from $X$ by identifying each $K_n$ to a point, and let $q: X \ra Y$ be the quotient map.  Also, for each positive integer $m$, let $Y_m$ be the quotient space obtained from $X$ by identifying each $K_n$ with $n\neq m$ to a point, and let $q_m: X\ra Y_m$ be the quotient map.  Each of the compact spaces $Y$ and $Y_m$ is metrizable (by \cite[Lemma~2.6]{FI} for instance).

If there is no positive integer $m$ such that both $a$ and $b$ belong to $K_m$, then $q(a)\neq q(b)$, so there is a continuous real-valued function $f$ on $Y$ that separates $q(a)$ and $q(b)$.  Then $f\circ q$ is in $G$ and separates $a$ and $b$.  Suppose now that for some positive integer $m$, both $a$ and $b$ belong to $K_m$.  There is a continuous real-valued function $g$ on $q_m(K_m)$ that separates $q_m(a)$ and $q_m(b)$.  Apply the Tietze extension theorem to extend $g$ to a continuous real-valued function $f$ on $Y_m$.  Then $f\circ q_m$ is in $G$ and separates $a$ and $b$.
\epf

\bpf[Proof of Theorem~\ref{essential-Dirichlet}]
That $A$ is Dirichlet on $K$ whenever $\tA$ is Dirichlet on $X$ is essentially trivial and hence left to the reader.

We suppose now that $A$ is Dirichlet on $K$ and will prove that then $\tA$ is Dirichlet on $X$.  Let $f$ in $\crx X$ and $\vep>0$ be arbitrary.  By Lemma~\ref{S-W}, there is a positive integer $N$ and a function $g$ in $\crx X$ such that $g|K_n$ is constant for all $n>N$ and $\|g-f\|_X< \vep/2$.  Since $A$ is Dirichlet on $K$, there is, for each $n=1,\ldots, N$, a function $\alpha_n$ in $\crx {K_n}$ with $\|\alpha_n\|_{K_n}<\vep/2$ such that $g|K_n - \alpha_n$ is the real part of some function $h_n$ in $A_n$.

Let $Z$ denote the quotient space obtained by identifying each $K_n$ for $n>N$ to a point, and let $p:X\ra Z$ be the quotient map.  The compact space $Z$ is metrizable \cite[Lemma~2.6]{FI}.

Let $g^*$ be the function in $\crx Z$ such that $g=g^*\circ p$.  For each $n=1,\ldots, N$, let $h_n^*$ and $\alpha_n^*$ be the functions in $C\bigl(p(K_n)\bigr)$ and $C_\R\bigl(p(K_n)\bigr)$, respectively, such that $h_n=h_n^*\circ(p|K_n)$ and $\alpha_n=\alpha_n^*\circ (p|K_n)$.  By the Tietze extension theorem, there is a function $\alpha^*$ in $\crx Z$ with $\|\alpha^*\|_Z<\vep/2$ such that $\alpha^*|\bigl(p(K_n)\bigr)=\alpha_n^*$ for every $n=1,\ldots, N$.  The Tietze extension theorem also yields a function $s$ in $\crx Z$ such that $s|\bigl(p(K_n)\bigr) = \Im h_n^*$.

Set $h=\bigl[ (g^* - \alpha^*) + is\bigr] \circ p$.  On each $K_n$, for $n=1,\ldots, N$, the function $h$ coincides with $h_n$.  On each $K_n$, for $n>N$, of course $h$ is constant.  Thus $h$ belongs to $\tA$.  Furthermore, 
$$\| \Re h - f\|_X \leq \| \Re h - g\|_X + \|g- f\|_X = \|\alpha^*\|_Z + \|g-f\|_X < \vep.$$
\epf

%
%
%
%

\section{A nonembedding result}

\begin{proposition}
There exists a simply coconnected, compact metrizable space of topological dimension~$1$ that does not embed in $\R^2$.
\end{proposition}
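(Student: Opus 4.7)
The plan is to take $X$ to be a \emph{tree-like continuum} that does not embed in the plane, where by a tree-like continuum I mean a continuum homeomorphic to the inverse limit of an inverse sequence of finite trees (connected simplicial $1$-complexes without cycles).

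First, I would verify that every non-degenerate tree-like continuum $X$ automatically satisfies three of the four requirements of the proposition. Compactness and metrizability are inherited from the countable inverse system. The topological dimension is at most $1$ by the inverse-limit theorem \cite[Theorem~3.3.6]{Engelking} already used in this paper, and at least $1$ since $X$ is a non-degenerate continuum; hence $\dim X = 1$. For simply coconnectedness, the continuity of \v Cech cohomology for inverse limits of compact Hausdorff spaces gives
$$\check H^1(X;\Z) \;=\; \varinjlim_{n} \check H^1(T_n;\Z) \;=\; 0,$$
since each finite tree $T_n$ is contractible.

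The essential remaining step is to produce a tree-like continuum that does not embed in $\R^2$. I would invoke the classical construction of Case and Chamberlin (1960), which presents an inverse limit of planar finite trees whose bonding maps fold certain sub-triods with a twisting pattern that is locally planar at every finite stage but cannot be simultaneously realized by any embedding of the inverse limit into the plane. The non-embedding is detected by tracking the cyclic orderings that a hypothetical planar embedding would have to induce on the arms of certain distinguished triods inside each $T_n$, and then deriving a combinatorial incompatibility in the inverse limit.

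The main obstacle is precisely this last step: exhibiting and verifying the non-planarity of a tree-like continuum. Since the Case-Chamberlin example is classical and well documented, I would cite it rather than reproduce the argument, while the verification of the remaining properties of $X$, as sketched in the second paragraph, is routine.
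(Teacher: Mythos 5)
Your route is genuinely different from the paper's and is sound in outline: you take a non-planar tree-like continuum, verify $\dim X = 1$ via the inverse-limit theorem together with non-degeneracy, and verify $\check H^1(X;\Z)=0$ via continuity of \v Cech cohomology and contractibility of finite trees; all of that is correct. The paper instead takes $X = C \times T$, the product of a Cantor set with a single triode: $\dim(C\times T)=1$ by the standard product-dimension estimate, $\check H^1(C\times T;\Z)=0$ because $C$ is a deformation retract of $C\times T$, and $C\times T$ does not embed in $\R^2$ because it contains uncountably many pairwise disjoint triodes, contradicting R.~L.~Moore's 1928 theorem that any family of disjoint triodes in the plane is countable. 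The paper's non-embedding step is thus self-contained modulo a short classical lemma, whereas yours rests entirely on citing the construction of a non-planar tree-like continuum, which is a substantially deeper and harder-to-verify fact. In exchange your example would be connected (a continuum), while the paper's $C\times T$ is highly disconnected; but the proposition does not require connectedness, so this buys nothing here. One caveat: I would double-check the attribution to Case and Chamberlin (1960) --- that paper is primarily a characterization theorem for tree-like continua, and the first explicitly non-planar tree-like continua are sometimes credited elsewhere in the literature (e.g.\ to Ingram); since this single citation carries your entire non-embedding argument, the reference needs to be verified before the proof can be considered complete.
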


\bpf
A triode is a space homeomorphic to the letter \textsf{T}, i.e., to the set $\bigl([-1, 1] \times \{1\}\bigr) \cup \bigl(\{0\} \times [0, 1]\bigr)$ in the plane.  A theorem of R. L. Moore \cite{Moore} asserts that any collection of disjoint triodes in $\R^2$ is countable.

Let $C$ be a Cantor set, and let $T$ be a triode.  Then the compact metrizable space $C\times T$ has topological dimension~$1$ by a standard result on the dimension of a product space \cite[p.~33, Corollary]{HW}.  Also $\check H^1(C\times T; \Z)\approx \check H^1(C;\Z)=0$ since $C$ is a deformation retract of $C\times T$.  But $C\times T$ is the union of an uncountable collection of disjoint triodes and hence does not embed in $\R^2$ by Moore's theorem.
\epf

\end{document}